\newcommand{\N}{\mathbb{N}}
\newcommand{\R}{\mathbb{R}}
\newcommand{\M}{\mathbb{M}}
\newcommand{\A}{\mathcal{A}}
\newcommand{\ie}{\textit{i.e }}
\newcommand{\Lip}{\operatorname{Lip}}
\newcommand{\Tr}{\operatorname{Tr}}
\newcommand{\SB}{\operatorname{SB}}
\newcommand{\dH}{d_{\mathbb{H}}}
\newcommand{\nb}{\overline{n}}
\newcommand{\mb}{\overline{m}}
\newcommand{\vspan}{\operatorname{span}}
\newcommand{\ee}{\varepsilon}
\theoremstyle{plain}
\newtheorem{prop}{Proposition}[section]
\newtheorem{theorem}[prop]{Theorem}
\newtheorem{lemma}[prop]{Lemma}
\newtheorem{corollary}[prop]{Corollary}
\theoremstyle{definition}
\newtheorem{definition}[prop]{Definition}
\newtheorem{pb}{Problem}
\newtheorem*{thank}{Acknowledgments}
\theoremstyle{remark}
\newtheorem{rmk}[prop]{Remark}
\begin{document}

\allowdisplaybreaks

\title[Asymptotic smoothness, concentration properties and applications]{Asymptotic smoothness, concentration properties in Banach spaces and applications}

\author{A.~Fovelle}
\address{Institute of Mathematics (IMAG) and Department of Mathematical Analysis, University of Granada, 18071, Granada, Spain}
\email{audrey.fovelle@ugr.es}

\thanks{Research partially supported by MCIN/AEI/10.13039/501100011033 grant PID2021-122126NB-C31 and by ``Maria de Maeztu'' Excellence Unit IMAG, reference CEX2020-001105-M funded by MCIN/AEI/10.13039/501100011033}

\keywords{Banach spaces, Hamming graphs, Asymptotic smoothness, nonlinear embeddings, concentration properties, Szlenk index}

\begin{abstract} We prove an optimal result of stability under $\ell_p$-sums of some concentration properties for Lipschitz maps defined on Hamming graphs into Banach spaces. As an application, we give examples of spaces with Szlenk index arbitrarily high that admit nevertheless a concentration property. In particular, we get the very first examples of Banach spaces with concentration but without asymptotic smoothness property.
\end{abstract}

\maketitle

 \setcounter{tocdepth}{1}

\section{Introduction}

In 2008, in order to show that $L_p(0,1)$ is not uniformly homeomorphic to $\ell_p \oplus \ell_2$ for $p \in (1, \infty) \setminus \{2 \}$, Kalton and Randrianarivony \cite{KR} introduced a new technique based on a certain class of graphs and asymptotic smoothness ideas. More specifically, they proved that reflexive asymptotically uniformly smooth (AUS) Banach spaces (we refer the reader to Section $2$ for the definitions) have a concentration property for Lipschitz maps defined on Hamming graphs. This concentration property is stable under coarse Lipschitz embeddings (\ie under maps that are bi-Lipschitz for large distances, see Section 2 for a precise definition) and prevents the equi-Lipschitz embeddings of the Hamming graphs, which makes it an obstruction to the coarse Lipschitz embedding of spaces without (enough) concentration. Their result was later extended to the quasi-reflexive case by Lancien et Raja \cite{LR}, who introduced a weaker concentration property that still provides an obstruction to coarse Lipschitz embeddings. Soon after, Causey \cite{Causey3.5} proved that this same weaker concentration property also applies to quasi-reflexive spaces with so-called upper $\ell_p$ tree estimates. The proof of these results is in two steps. In the first one, vaguely speaking, reflexivity or quasi-reflexivity is used to see a Lipschitz map defined on the Hamming graphs as the sum of branches of a weakly-null tree. The second one consists in dealing with this weakly-null tree using a hypothesis of upper $\ell_p$ tree estimates. So as to get examples of non-quasi-reflexive spaces with concentration properties, the author started a general study of these concentration properties, together with new ones and proved that $\ell_p$-sums of quasi-reflexive spaces with these so-called upper $\ell_p$ tree estimates admit concentration. To do so, the following general theorem was proved \cite{fov}:

\begin{theorem} \label{thm somme E}
Let $p \in (1, \infty)$, $\lambda > 0$, $(X_n)_{n \in \N}$ a sequence of Banach spaces with property $\lambda$-HFC$_{p,d}$ (resp. $\lambda$-HIC$_{p,d}$). \\
Then $\left( \sum_{n \in \N} X_n \right)_{\ell_p}$ has property $(\lambda+2+\varepsilon)$-HFC$_{p,d}$ (resp. $(\lambda+2+\varepsilon)$-HIC$_{p,d}$) for every $\varepsilon > 0$.
\end{theorem}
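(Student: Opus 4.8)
The plan is to fix a Lipschitz map $f$ on a Hamming graph with values in $E=\left(\sum_{n}X_n\right)_{\ell_p}$, to reduce to the case $\Lip(f)=1$ by homogeneity, and to treat the HFC and HIC cases in parallel: the two properties differ only in the family of pairs $(\nb,\mb)$ for which the conclusion is demanded, and every step below will produce a single infinite set $\M$ on which the desired estimate holds for all admissible such pairs. Denoting by $P_j\colon E\to X_j$ the canonical norm-one coordinate projections, I would decompose $f=(f_j)_j$ with $f_j=P_j\circ f$. Each $f_j$ is $1$-Lipschitz into $X_j$, so the hypothesis $\lambda$-HFC$_{p,d}$ (resp. $\lambda$-HIC$_{p,d}$) applies coordinatewise; the structural feature to exploit is that the increments of $f$ obey the $\ell_p$-budget $\sum_j\|f_j(\nb)-f_j(\mb)\|^p\le d(\nb,\mb)^p$, so that the concentrated diameters of the coordinate maps, once recombined in $\ell_p$, cannot all be large at once.

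First I would split $E$ into a head $\bigoplus_{j<N}X_j$ and a tail $\left(\sum_{j\ge N}X_j\right)_{\ell_p}$. On the head there are only finitely many coordinates, so a finite diagonalization over $j\in\{0,\dots,N-1\}$ of the hypothesis yields a single infinite $\M_1$ on which each $f_j$, $j<N$, satisfies its concentration estimate simultaneously. Recombining these estimates through the $\ell_p$-norm and feeding in the budget inequality above, the head part $\|P_{<N}(f(\nb)-f(\mb))\|$ is bounded by $\lambda$ (up to the scaling of the concentration bound by the per-coordinate Lipschitz behaviour), uniformly for the admissible pairs in $[\M_1]^k$.

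The tail is where the two universal units are spent and where the real work lies. The tail map is again $1$-Lipschitz, and I would estimate $\|P_{\ge N}(f(\nb)-f(\mb))\|\le\|P_{\ge N}f(\nb)\|+\|P_{\ge N}f(\mb)\|$ and try to force each summand to be at most $1+\tfrac{\varepsilon}{2}$ on a suitable infinite subset. This is the main obstacle: since $f$ ranges over infinitely many vertices, there is no a priori uniform smallness of the tails $\|P_{\ge N}f(\nb)\|$, even though each individual image vector has $\ell_p$-summable coordinates. I would overcome this by a gliding-hump/Ramsey extraction, choosing the truncation level $N$ and passing to a further infinite subset $\M\subseteq\M_1$ at the same time, so that the later coordinates only ``switch on'' at vertices built from large elements of $\M$; this arranges that the tails are controlled by the Lipschitz increments along a single edge, whence the bound $2+\varepsilon$.

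Finally, combining the head and tail estimates by the triangle inequality in $E$ gives $\|f(\nb)-f(\mb)\|\le\lambda+2+\varepsilon$ for all admissible pairs in $[\M]^k$, which is exactly property $(\lambda+2+\varepsilon)$-HFC$_{p,d}$ (resp. $(\lambda+2+\varepsilon)$-HIC$_{p,d}$). The delicate point I expect to require the most care is the coordination of the truncation level with the subset extraction, so that one and the same $\M$ serves all admissible pairs and all scales at once; the passage from HFC to HIC should then cost nothing beyond restricting the class of pairs considered throughout.
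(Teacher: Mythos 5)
Your head/tail architecture superficially resembles the actual proof (given in \cite{fov}, and in strengthened form in Section \ref{section sums} of this paper), but both of your key intermediate steps have gaps, and the tail step is false as stated. For the head: after your diagonalization you have, for each coordinate $j'<N$, the bound $\|f_{j'}(\nb)-f_{j'}(\mb)\|\leq \lambda\bigl(\sum_{j=1}^k \Lip_j(f_{j'})^p\bigr)^{1/p}$, and recombining in the $\ell_p$-norm only yields $\|P_{<N}(f(\nb)-f(\mb))\|\leq \lambda\bigl(\sum_{j'<N}\sum_{j=1}^k \Lip_j(f_{j'})^p\bigr)^{1/p}$. To reach the desired bound you would need $\sum_{j'<N}\Lip_j(f_{j'})^p\leq \Lip_j(f)^p$, but this inequality runs the wrong way: a sum of suprema dominates the supremum of the sum, and the discrepancy can be of order $N$ (take $k=1$ and $f(n)=e_{n\bmod N}$ in $\ell_p^N$: each coordinate has $\Lip_1(f_{j'})=1$ while $\Lip_1(f)=2^{1/p}$). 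Your budget inequality holds pair by pair, whereas the $\Lip_j(f_{j'})$ are suprema over possibly different pairs, so it cannot simply be fed in. This is exactly the gap that Lemma 3.1 of this paper (equivalently, the engine behind Proposition 3.1 of \cite{fov}) closes: an infimum-over-subsets plus Ramsey argument produces an infinite $\M$ on which the \emph{restricted} componentwise directional constants become almost $\ell_p$-additive, and iterating over the finitely many head coordinates gives the finite-sum case with constant $\lambda+\varepsilon$. So the head is repairable, but only with an extraction lemma your sketch does not contain.

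The tail is where the proposal breaks down irreparably. You want to choose $N$ and $\M$ so that $\|P_{\geq N}f(\nb)\|\leq 1+\varepsilon/2$ for all $\nb\in[\M]^k$ (after normalizing $\Lip(f)=1$). This is impossible: take $X_n=\R$ for all $n$, so $X=\ell_p$, and $f(\nb)=2^{-1/p}(e_{n_1}+\cdots+e_{n_k})$. Then $\Lip(f)=1$ and $\Lip_j(f)=1$ for every $j$, yet for every $N$ and every infinite $\M$, every vertex of $[\M]^k$ whose entries exceed $N$ satisfies $\|P_{\geq N}f(\nb)\|=(k/2)^{1/p}$, which is arbitrarily large with $k$; no gliding hump or Ramsey extraction can remove this, since tail mass of order $\bigl(\sum_j \Lip_j(f)^p\bigr)^{1/p}\approx k^{1/p}\Lip(f)$ genuinely persists. (This $f$ of course still satisfies the conclusion of the theorem; it refutes only your intermediate claim.) The correct tail estimate is not pointwise smallness but concentration around a fixed center: composing with the norm-preserving $1$-Lipschitz map $\phi(x)=(\|x_n\|_{X_n})_{n}\in\ell_p$, Kalton--Randrianarivony's concentration theorem in $\ell_p$ gives a single vector $u\in\ell_p$ (an iterated weak limit) and $\M''$ with $\|\phi\circ f(\nb)-u\|\leq\bigl(\sum_j\Lip_j(f)^p\bigr)^{1/p}+\varepsilon'$ on $[\M'']^k$; one then chooses $N$ so that the tail of this one vector $u$ is small, which bounds each tail norm by the \emph{tail directional sum} plus $\varepsilon'$ (not by $\varepsilon$), and the triangle inequality applied to these bounds is what produces the additive constant $2$ in $\lambda+2+\varepsilon$. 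This $\ell_p$-concentration theorem, applied through $\phi$, is the essential ingredient missing from your proposal, and without it (or a proof of an equivalent statement) the tail step cannot be completed.
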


where property $\lambda$-HFC$_{p,d}$ (resp. $\lambda$-HIC$_{p,d}$), defined in \cite{fov}, is a refinement of the property $\lambda$-HFC$_p$ (resp. $\lambda$-HIC$_p$) first considered by Kalton and Randrianarivony (resp. Lancien and Raja): a space $X$ has property $\lambda$-HFC$_p$ (resp. $\lambda$-HIC$_p$) if for any Lipschitz function $f : ([\N]^k, \dH) \to X$, there exist $\nb < \mb$ (resp. $n_1 < m_1 < n_2 < m_2 < \ldots < n_k < m_k$) such that $\|f(\nb)-f(\mb)\| \leq \lambda k^{1/p} \Lip(f)$ (see Section 2 for the definitions of the concentration properties).  

Since the only known examples of reflexive or quasi-reflexive spaces with concentration were those with an asymptotic smoothness property (given by the so-called upper $\ell_p$ tree estimates), the new examples of spaces with concentration given by Theorem \ref{thm somme E} also happen to have an asymptotic smoothness property. However, as mentioned in \cite{fov}, if one manages to prove Theorem \ref{thm somme E} without a loss on the constant, one would get an example of a Banach space admitting concentration without an asymptotic smoothness property. This is what we are going to do in this paper. As a consequence, we will even get Banach spaces with Szlenk index as big as we want that cannot equi-Lipschitz contain the Hamming graphs. This is the very first result of this type for a non-asymptotically smooth Banach space. In order to show this result, we introduce the notions and the terminology we will use in the second section of this paper while Section $3$ is dedicated to the proof of the main result:

\begin{theorem} \label{thm somme}
Let $\lambda \geq 2$ and $(X_n)$ be a sequence of Banach spaces with property $\lambda$-HFC$_{p,d}$ (resp. property $\lambda$-HIC$_{p,d}$).
For every $\varepsilon>0$, $X=(\sum_n X_n)_{\ell_p}$ has property $(\lambda+\varepsilon)$-HFC$_{p,d}$ (resp. property $(\lambda+\varepsilon)$-HIC$_{p,d}$).
\end{theorem}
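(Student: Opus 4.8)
The guiding idea is that the target norm of $X=(\sum_n X_n)_{\ell_p}$ is $\ell_p$-additive over the blocks, so the whole estimate should be assembled from per-block estimates \emph{summed in the $p$-th power} rather than through the triangle inequality; this is what should turn the additive loss $+2$ of Theorem~\ref{thm somme E} into an arbitrarily small $+\varepsilon$. I treat the HFC and HIC cases simultaneously and, after rescaling, assume $\Lip(f)=1$ for the given map $f:([\N]^k,\dH)\to X$. Write $f=(f^{(n)})_n$ with $f^{(n)}:[\N]^k\to X_n$. The engine of the proof is the following bookkeeping: call a pair of tuples \emph{admissible} if it is separated ($\nb<\mb$) in the HFC case and interlaced in the HIC case; either way $\dH(\nb,\mb)=k$, and one can join $\nb$ to $\mb$ by a path of length $k$ \emph{inside} $[\N]^k$ changing one coordinate at a time (changing the top coordinates first keeps the tuples strictly increasing). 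Writing $\Delta_1,\dots,\Delta_k$ for the resulting unit increments of $f$, each step is an adjacent increment of a $1$-Lipschitz map, so $\sum_n\|\Delta_i^{(n)}\|^p\le1$ for every $i$; setting $b_n:=\sum_i\|\Delta_i^{(n)}\|^p$ gives the crucial $\ell_p$-additivity of the increment budget, $\sum_n b_n\le k$.

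The point of the distance-aware formulation (the subscript $d$) is that the concentration estimate for each block may be taken \emph{relative to that block's own budget}: I would use property $\lambda$-HFC$_{p,d}$ (resp. $\lambda$-HIC$_{p,d}$) of $X_n$, in its distance-aware form, to produce on a suitable infinite subset of indices an admissible pair with
\[
\|f^{(n)}(\mb)-f^{(n)}(\nb)\|\le \lambda\, b_n^{1/p},
\]
so that summing in $\ell_p$ over $n$ gives exactly $\lambda^p\sum_n b_n\le\lambda^p k$. To realise this estimate \emph{for a single admissible pair simultaneously in all blocks}, I would proceed by a nested extraction for the finitely many leading blocks $n\le N$ — applying the property of $X_n$ inside the subset already selected for $X_1,\dots,X_{n-1}$, which is legitimate precisely because the distance-aware property yields control on an \emph{infinite subset} and not merely on one pair — followed by a diagonal argument together with a Ramsey/stabilization step ensuring that the residual tail budget $\sum_{n>N}b_n$ is as small as we wish, say $\le\delta^p k$, on the pair eventually chosen.

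Combining in the $\ell_p$ norm then yields
\[
\|f(\mb)-f(\nb)\|^p=\sum_{n\le N}\|f^{(n)}(\mb)-f^{(n)}(\nb)\|^p+\sum_{n>N}\|f^{(n)}(\mb)-f^{(n)}(\nb)\|^p\le \lambda^p k+\delta^p k,
\]
whence $\|f(\mb)-f(\nb)\|\le(\lambda^p+\delta^p)^{1/p}k^{1/p}\le(\lambda+\varepsilon)k^{1/p}$ once $\delta$ is small enough. The hypothesis $\lambda\ge2$ enters in the extraction: when the pair selected for block $n$ is re-routed so as to remain admissible and budget-controlled for the previously treated blocks, a triangle-type correction of size at most $2\,b_n^{1/p}$ appears, and it is absorbed into $\lambda\,b_n^{1/p}$ exactly because $\lambda\ge2$. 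This gives property $(\lambda+\varepsilon)$-HFC$_{p,d}$ (resp. $(\lambda+\varepsilon)$-HIC$_{p,d}$) for $X$.

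The main obstacle I anticipate is the simultaneous extraction of the second paragraph: one must select one admissible pair realising the per-block estimate in \emph{every} block at once while keeping the tail budget negligible, and for this the paths and their per-block increments must stay aligned through infinitely many applications of the block properties. Controlling this bookkeeping — in particular checking that passing to the diagonal subset neither spoils the admissibility of the pair nor inflates the leading budgets — is where the distance-aware property HFC$_{p,d}$/HIC$_{p,d}$ is indispensable and where the bulk of the technical work lies; by contrast the final $\ell_p$-combination and the choice of $\delta$ are routine.
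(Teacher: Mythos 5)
There are two genuine gaps here, and the second one is fatal to the strategy as you describe it. First, the per-block estimate $\|f^{(n)}(\mb)-f^{(n)}(\nb)\|\le\lambda\,b_n^{1/p}$ is not what property $\lambda$-HFC$_{p,d}$ (resp.\ $\lambda$-HIC$_{p,d}$) provides: the property bounds distances by $\lambda\bigl(\sum_{j=1}^k\Lip_j(f^{(n)})^p\bigr)^{1/p}$, where each $\Lip_j$ is a supremum over all pairs in $H_j$ of the restriction to a subgraph, not by the increments of a path joining the particular pair under consideration. Since your budget $b_n$ is defined in terms of a pair that is yet to be selected, the estimate you invoke is circular. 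What can actually be proved is a budget-splitting statement in terms of the directional constants (this is the lemma opening Section \ref{section sums}: after passing to a subgraph, $\Lip_j(\Pi\circ f)^p+\Lip_j((I-\Pi)\circ f)^p\le\Lip_j(f)^p+\varepsilon$ for a splitting of $X$ into two complemented blocks), and iterating such a statement is only feasible for finitely many blocks. Note also that HFC$_{p,d}$ and HIC$_{p,d}$ require the conclusion for \emph{all} pairs of some subgraph $[\M]^k$ (all interlaced pairs in the HIC case), not for one well-chosen pair, so "the pair eventually chosen" is not the right quantifier to aim for.

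Second, and more seriously, no extraction can make the residual tail budget small on the pairs that survive, so the step you yourself flag as the main obstacle is not merely technical: it is impossible as stated. Concretely, pick a norm-one vector $e_n\in X_n$ for each $n$ and let $f(\nb)=e_{n_k}$, the chosen vector of the block indexed by the last entry of $\nb$. Then for every $N$ and every $\M\in[\N]^\omega$, any pair of $[\M]^k$ whose entries exceed $N$ --- that is, all but finitely many pairs of any subgraph, hence in particular pairs the properties must control --- has its difference supported entirely on blocks $n>N$, with tail budget equal to $2$, not $o(1)$. The paper's proof handles the tail by a mechanism your proposal lacks, and this is its key idea: compose $f$ with the norm map $\phi:X\to\ell_p$, $(x_n)_n\mapsto(\|x_n\|)_n$, apply the Kalton--Randrianarivony theorem in $\ell_p$ to obtain a concentration point $u$ (an iterated weak limit of $\phi\circ f$), and choose $N$ so that $v=(I-P_N)u$ has small norm; then, for all pairs of a further subgraph, the tail difference $\|(I-\Pi_N)(f(\nb)-f(\mb))\|$ is bounded, via the triangle inequality through $v$ and the fact that $\phi$ preserves norms, by $2\bigl(\sum_{j=1}^k\Lip_j((I-\Pi_N)\circ f)^p\bigr)^{1/p}$ plus a small error. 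The infinite tail thus never uses the concentration properties of the blocks $X_n$ at all, and the coefficient $2$ in front of the tail budget is absorbed precisely because $\lambda\ge2$ --- this, and not a "re-routing correction" in a nested extraction, is where the hypothesis enters. Only the finite head $(\sum_{n\le N}X_n)_{\ell_p}$ is treated with the blocks' property, through the finite-sum stability result (Proposition 3.1 of \cite{fov}), combined with the budget-splitting lemma above. Your final $\ell_p$-recombination paragraph is fine, but the engine that makes it run is this head/tail split via the Kalton--Randrianarivony theorem, not a simultaneous per-block extraction.
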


In section $4$, we detail how this theorem allows us to get Banach spaces with Szlenk index as big as we want with some concentration and we study the topological complexity of the class of Banach spaces having HFC$_{p,d}$.

\section{Definitions and notation}

All Banach spaces in these notes are assumed to be real and infinite-dimensional unless otherwise stated. We denote the closed unit ball of a Banach space $X$ by $B_X$, and its unit sphere by $S_X$. Given a Banach space $X$ with norm $\|\cdot\|_X$, we simply write $\|\cdot\|$ as long as it is clear from the context on which space it is defined. \\
Let $(X_n)_{n \in \N}$ be a sequence of Banach spaces and $p \in [1, \infty)$. We define the sum $\left( \sum_{n \in \N} X_n \right)_{\ell_p}$ to be the space of sequences $(x_n)_{n \in \N}$, where $x_n \in X_n$ for all $n \in \N$, such that $\sum_{n \in \N} \|x_n\|_{X_n}^p$ is finite, and we set
\[ \|(x_n)_{n \in \N} \| = \Big( \sum_{n \in \N} \|x_n\|_{X_n}^p \Big)^{\frac{1}{p}} . \]
One can check that $\left( \sum_{n \in \N} X_n \right)_{\ell_p}$, endowed with the norm $\|\cdot\|$ defined above, is a Banach space. We can, in a similar way, define finite sums $\left( \sum_{j=1}^n X_j \right)_{\ell_p}$ for all $n \in \N$, and, in case $n=2$, we will write $X_1 \bigoplus\limits_p X_2$. 

\subsection{Hamming graphs}

Before introducing concentration properties, we need to define special metric graphs that we shall call \textit{Hamming graphs}. Let $\M$ be an infinite subset of $\N$. We denote by $[\M]^{\omega}$ the set of infinite subsets of $\M$. For $\M \in [\N]^{\omega}$ and $k \in \N$, let
\[ [\M]^k = \{ \overline{n}=(n_1, \dots, n_k) \in \M^k ; n_1 < \cdots < n_k \} , \]
\[ [\M]^{\leq k} = \bigcup_{j=1}^k [\M]^j \cup \{ \varnothing \} , \]
and
\[ [\M]^{< \omega} = \bigcup\limits_{k=1}^{\infty} [\M]^k \cup \{ \varnothing \} . \]
Then we equip $[\M]^k$ with the \textit{Hamming distance}:
\[ \dH(\overline{n}, \overline{m})=| \{ j ; n_j \neq m_j \} |  \]
for all $\overline{n}=(n_1, \dots, n_k), \overline{m}=(m_1, \dots, m_k) \in [\M]^k$. Let us mention that this distance can be extended to $[\M]^{< \omega}$ by letting
\[ \dH(\overline{n}, \overline{m})=| \{ i \in \{1, \cdots, \min(l,j) \} ; n_i \neq m_i \} | + |l-j|  \]
for all $\overline{n}=(n_1, \dots, n_l), \overline{m}=(m_1, \dots, m_j) \in [\M]^{< \omega}$ (with possibly $l=0$ or $j=0$). We also need to introduce $I_k(\M)$, the set of strictly interlaced pairs in $[\M]^k$:
\[ I_k(\M)= \{ (\overline{n}, \overline{m}) \in [\M]^k \times [\M]^k ; n_1 < m_1 < \cdots < n_k < m_k  \} \]
and, for each $j \in \{ 1, \cdots, k \}$, let
\[ H_j(\M)= \{ (\overline{n}, \overline{m}) \in [\M]^k \times [\M]^k ; \forall i \neq j, n_i=m_i \text{ and } n_j < m_j \} . \]

Let us mention that, in this paper, we will only be interested in the Hamming distance but originally, when Hamming graphs were used in \cite{KR}, it could be equally replaced (unless for their last Theorem 6.1) by the \textit{symmetric distance}, defined by
\[ d_{\Delta}(\overline{n}, \overline{m})=\dfrac{1}{2} | \overline{n} \triangle \overline{m} | \] 
for all $\overline{n}, \overline{m} \in [\N]^{< \omega}$, where $\overline{n} \triangle \overline{m}$ denotes the symmetric difference between $\overline{n}$ and $\overline{m}$.

\subsection{Asymptotic uniform smoothness and Szlenk index}

Let us start this subsection by defining the asymptotic uniform smoothness. Let $(X, \|\cdot\|)$ be a Banach space. Following Milman (see \cite{milman}), we introduce the following modulus: for all $t \geq 0$, let
\[ \overline{\rho}_X(t)= \sup\limits_{x \in S_X} \inf\limits_{Y} \sup\limits_{y \in S_Y} (\|x+ty\|-1) \]
where $Y$ runs through all closed linear subspaces of $X$ of finite codimension. We say that $\|\cdot\|$ is \textit{asymptotically uniformly smooth} (in short AUS) if $\lim_{t \to 0} \frac{\overline{\rho}_X(t)}{t}=0$. If $p \in (1, \infty )$, $\|\cdot\|$ is said to be \textit{$p$-AUS} if there is a constant $C > 0$ such that, for all $t \in [0, \infty )$, $\overline{\rho}_X(t) \leq C t^p$. If $X$ has an equivalent norm for which $X$ is AUS (resp. $p$-AUS), $X$ is said to be \textit{AUSable} (resp. \textit{$p$-AUSable}). This notion is related to the Szlenk index which definition we will recall now. It is based on the following Szlenk derivation. For a Banach space $X$, $K\subset X^*$ weak$^*$-compact, and $\ee>0$, we let $s_\ee(K)$ denote the set of $x^*\in K$ such that for each weak$^*$-neighborhood $V$ of $x^*$, $\text{diam}(V\cap K)\ge \ee$. Then we  define the transfinite derivations as follows
	\[s_\ee^0(K)=K,\] \[s^{\xi+1}_\ee(K)=s_\ee(s^\xi_\ee(K)),\] for every ordinal $\xi$ and if $\xi$ is a limit ordinal, \[s^\xi_\ee(K)=\bigcap_{\zeta<\xi}s_\ee^\zeta(K).\]  
	For convenience, we let $s_0(K)=K$. If there exists an ordinal $\xi$ such that $s^\xi_\ee(K)=\varnothing$, we let $Sz(K,\ee)$ denote the minimum such ordinal, and otherwise we write $Sz(K,\ee)=\infty$.   We let $Sz(K)=\sup_{\ee>0} Sz(K,\ee)$, where $Sz(K)=\infty$ if $Sz(K,\ee)=\infty$ for some $\ee>0$. We let $Sz(X,\ee)=Sz(B_{X^*},\ee)$ and $Sz(X)=Sz(B_{X^*})$. If $X$ is separable, $Sz(X)$ is equal to the index associated with the following derivation:
\[ l_\varepsilon(K)=\{ x^* \in K; \exists (x_n^*) \subset K, \ \forall n \in \N, \ \|x_n^*-x^*\|\geq \varepsilon, \ x_n^* \overset{\omega^*}{\longrightarrow} x^* \} \]
if $K\subset X^*$ weak$^*$-compact, and $\ee>0$.

Let us remark that $Sz(X)< \infty$ if and only if $X$ is \textit{Asplund}, that is to say every separable subspace of $X$ has separable dual. For more information on the Szlenk index, we refer the reader to the survey \cite{Lancien2006}. To conclude this subsection, let us remind the following fondamental renorming result, due to Knaust, Odell and Schlumprecht \cite{knaust1999asymptotic} in the separable case and to Raja \cite{raja} in the non separable one (see \cite{GKL} for the precise quantitative version): 

\begin{theorem} \label{thm Sz}
Let $X$ be an infinite dimensional Banach space. Then $Sz(X)=\omega$ if and only if $X$ is AUSable if and only if $X$ is $p$-AUSable for some $p \in (1, \infty)$.
\end{theorem}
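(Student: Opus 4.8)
The statement packages three conditions, and I would prove the cycle $(3)\Rightarrow(2)\Rightarrow(1)\Rightarrow(3)$, writing $(1)$ for $Sz(X)=\omega$, $(2)$ for AUSable, and $(3)$ for $p$-AUSable. The implication $(3)\Rightarrow(2)$ is immediate: if $\overline{\rho}_X(t)\le Ct^p$ with $p>1$ then $\overline{\rho}_X(t)/t\le Ct^{p-1}\to 0$, so a $p$-AUS norm is in particular AUS.

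For $(2)\Rightarrow(1)$ I would pass to the dual and read the Szlenk derivation off the asymptotic convexity of $X^*$. The plan is to first recall the weak$^*$ Lindenstrauss-type duality between $\overline{\rho}_X$ and the asymptotic uniform convexity modulus $\overline{\delta}_{X^*}$ of the dual, so that an AUS norm on $X$ produces a norm on $X^*$ with $\overline{\delta}_{X^*}(t)>0$ for every $t>0$. Then I would establish the derivation estimate
\[ s_\varepsilon(B_{X^*})\subseteq (1-\theta(\varepsilon))\,B_{X^*}, \qquad \theta(\varepsilon)>0, \]
the point being that a norm-one $x^*$ survives one derivation only if it admits weak$^*$-nearby points of $B_{X^*}$ at distance $\ge\varepsilon/2$, which the convexity modulus forbids near the sphere. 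Iterating gives $s_\varepsilon^{\,n}(B_{X^*})\subseteq (1-\theta(\varepsilon))^n B_{X^*}$, and once the radius drops below $\varepsilon/2$ the next derivative is empty; hence $Sz(X,\varepsilon)<\omega$ for every $\varepsilon$, so $Sz(X)\le\omega$. Since $X$ is infinite dimensional one always has $Sz(X)\ge\omega$ (for small $\varepsilon$ one can derive arbitrarily many times using weak$^*$-null sequences in $B_{X^*}$), whence $Sz(X)=\omega$.

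The real work is $(1)\Rightarrow(3)$, a renorming, which I would split into a quantitative step and a construction. In the quantitative step I would upgrade the qualitative finiteness $Sz(X,\varepsilon)<\omega$ into a power-type estimate $Sz(X,\varepsilon)\le C\varepsilon^{-p_0}$ for some $p_0\in(1,\infty)$. For this I would use the submultiplicativity of the Szlenk index, $Sz(X,\varepsilon\delta)\le Sz(X,\varepsilon)\,Sz(X,\delta)$ (a consequence of the composition behaviour of the derivations $s_\varepsilon$, see \cite{Lancien2006}): writing $h(u)=\log Sz(X,e^{-u})$ this gives a finite, nondecreasing, subadditive function of $u>0$, and Fekete's lemma yields $h(u)/u\to p_0<\infty$, i.e. the desired polynomial bound. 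In the construction step I would use the derived sets $\big(s_\varepsilon^{\,n}(B_{X^*})\big)_n$, whose radii now decay at a uniform polynomial rate, to manufacture an equivalent dual norm whose unit ball interpolates these sets; the power-type bound translates into a quantitative weak$^*$ asymptotic uniform convexity modulus $\overline{\delta}_{X^*}(t)\ge c\,t^{q}$ of power type $q=p_0$, and dualizing back through the Lindenstrauss duality produces an equivalent $p$-AUS norm on $X$ with $1/p+1/q=1$.

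The main obstacle is this last renorming, for two reasons. First, turning a polynomial bound on the size of the Szlenk derivatives into a single norm with a controlled modulus requires an honest interpolation/averaging construction on the dual ball, with the constants tracked so that the resulting modulus is genuinely of power type rather than merely AUS; this is where the quantitative content of \cite{knaust1999asymptotic} and \cite{GKL} sits. Second, when $X$ is non-separable one cannot argue with weak$^*$-null sequences and the sequential derivation $l_\varepsilon$, and must run the whole argument with weak$^*$-compactness and finite-codimensional subspaces, which is exactly the refinement supplied by Raja \cite{raja}. Everything else — the duality formula and the geometric iteration of $s_\varepsilon$ — is routine once these two points are in place.
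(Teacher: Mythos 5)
The first thing you should know is that the paper does not prove this statement at all: Theorem \ref{thm Sz} is imported as a known renorming theorem, attributed to Knaust--Odell--Schlumprecht \cite{knaust1999asymptotic} in the separable case and to Raja \cite{raja} in the non-separable one, with the quantitative version in \cite{GKL}, and it is then used as a black box. So there is no internal proof to compare yours against; the only meaningful comparison is with those references, and measured against them your outline follows exactly the standard route. The implication $(3)\Rightarrow(2)$ is indeed trivial; $(2)\Rightarrow(1)$ via the duality between $\overline{\rho}_X$ and the weak$^*$ asymptotic uniform convexity of the dual norm, the derivation estimate $s_\varepsilon(B_{X^*})\subseteq(1-\theta(\varepsilon))B_{X^*}$, and the monotonicity-plus-homogeneity iteration is correct (as is your remark that $Sz(X)\ge\omega$ in infinite dimensions, which needs a Josefson--Nissenzweig-type argument to produce the weak$^*$-null sequences you invoke); and $(1)\Rightarrow(3)$ via the submultiplicativity $Sz(X,\varepsilon\varepsilon')\le Sz(X,\varepsilon)\,Sz(X,\varepsilon')$ together with Fekete's lemma, yielding $Sz(X,\varepsilon)\le C\varepsilon^{-p_0}$, is precisely the argument of \cite{GKL}.

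That said, as a proof your text is incomplete, and the gap is exactly the one you flag yourself: the passage from the power-type bound on $Sz(X,\cdot)$ to an equivalent dual norm with power-type weak$^*$ AUC modulus is asserted (``an honest interpolation/averaging construction on the dual ball'') rather than carried out, and in the non-separable case the sequential derivation $l_\varepsilon$ is unavailable, so the whole argument must be rerun with weak$^*$-compactness, which is Raja's contribution, not a routine adjustment. That renorming construction is the actual mathematical content of the theorem --- everything else in your sketch is standard once it is granted --- so what you have produced is a correct, well-attributed roadmap of the literature proof rather than a self-contained argument. For the purposes of this paper that is no worse than what the author does, since the theorem is cited and not proved; but you should be clear that the central step of your plan is precisely the part you have deferred to \cite{knaust1999asymptotic}, \cite{GKL} and \cite{raja}.
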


\subsection{Lipschitz and coarse Lipschitz embeddings}

Let us recall some definitions on metric embeddings. Let $(X,d_X)$ and $(Y, d_Y)$ two metric spaces, $f$ a map from $X$ to $Y$. We define the \textit{compression modulus} of $f$ by 
\[ \forall t \geq 0, \hspace{2mm} \rho_f(t)= \inf \{ d_Y(f(x),f(y)) ; d_X(x,y) \geq t \} ; \]
and the \textit{expansion modulus} of $f$ by 
\[ \forall t \geq 0, \hspace{0.2cm} \omega_f(t)= \sup \{ d_Y(f(x),f(y)) ; d_X(x,y) \leq t \} . \]
We adopt the convention $\inf(\varnothing)= + \infty$. Note that, for every $x,y \in X$, we have 
\[ \rho_f(d_X(x,y)) \leq d_Y(f(x),f(y)) \leq \omega_f(d_X(x,y)) . \]
If one is given a family of metric spaces $(X_i)_{i \in I}$, one says that $(X_i)_{i \in I}$ \textit{equi-Lipschitz embeds} into $Y$ if there exist $A, B$ in $(0, \infty )$ and, for all $i \in I$, maps $f_i : X_i \to Y$ such that $\rho_{f_i}(t) \geq At$ and $\omega_{f_i}(t) \leq Bt$ for all $t \geq 0$. And we say that $f$ is a \textit{coarse Lipschitz embedding} if there exist $A, B, C, D$ in $(0, \infty)$ such that $\rho_f(t) \geq At-C$ and $\omega_f(t) \leq Bt+D$ for all $t \geq 0$. If $X$ and $Y$ are Banach spaces, this is equivalent to the existence of numbers $\theta \geq 0$ and $0 < c_1 < c_2$ so that :
\[ c_1 \|x-y\|_X \leq \|f(x)-f(y)\|_Y \leq c_2 \|x-y\|_X \]
for all $x,y \in X$ satisfying $\|x-y\|_X \geq \theta$.  

\subsection{Definitions of concentration properties}

Let us start this subsection by recalling a version of Ramsey's theorem we will use.

\begin{theorem}[Ramsey's theorem \cite{Ramsey}] \label{ramsey}
Let $k \in \N$ and $\A \subset [\N]^k$. \\
There exists $\M \in [\N]^{\omega}$ such that either $[\M]^k \subset \A$ or $[\M]^k \cap \A = \varnothing$.
\end{theorem}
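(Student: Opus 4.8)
The plan is to prove this by induction on $k$, following the classical argument for the infinite Ramsey theorem; the two colors here being membership or non-membership in $\A$.

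First, for the base case $k=1$, I would identify $[\N]^1$ with $\N$ and observe that, since $\N$ is infinite, at least one of $\A$ and $\N \setminus \A$ is infinite. Taking $\M \in [\N]^{\omega}$ to be an infinite one of the two immediately yields either $[\M]^1 \subset \A$ or $[\M]^1 \cap \A = \varnothing$, as desired.

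For the inductive step, assuming the statement for $k-1$, I would build by recursion a decreasing sequence of infinite sets $\N = M_0 \supseteq M_1 \supseteq \cdots$ together with integers $a_1 < a_2 < \cdots$ as follows. Given $M_{i-1}$, set $a_i = \min M_{i-1}$ and define a coloring of $[M_{i-1} \setminus \{a_i\}]^{k-1}$ by declaring $\overline{n}$ to be \emph{good} if $(a_i, \overline{n}) \in \A$, where $(a_i, \overline{n})$ denotes the element of $[\N]^k$ obtained by prepending $a_i$ (this is a legitimate increasing $k$-tuple, since $a_i = \min M_{i-1}$ lies strictly below every entry of $\overline{n}$). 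Applying the induction hypothesis to this two-coloring of $[M_{i-1} \setminus \{a_i\}]^{k-1}$ produces an infinite subset $M_i \subseteq M_{i-1} \setminus \{a_i\}$ on which the coloring is constant, and I record the corresponding value $c_i \in \{0,1\}$ (say $1$ if the good color is attained, $0$ otherwise). The key structural consequence of the nesting is that every $a_j$ with $j > i$ satisfies $a_j \in M_{j-1} \subseteq M_i$, so that for every $\overline{n} \in [M_i]^{k-1}$ the membership of $(a_i, \overline{n})$ in $\A$ is governed solely by $c_i$.

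Then I would apply the pigeonhole principle to the binary sequence $(c_i)_i$ to extract an infinite set of indices $I \subseteq \N$ on which $c_i$ takes a fixed value $c$, and set $\M = \{a_i ; i \in I\} \in [\N]^{\omega}$. To conclude, I would take an arbitrary $\overline{b} = (b_1, \dots, b_k) \in [\M]^k$; writing $b_1 = a_j$ with $j \in I$, each of the remaining coordinates $b_2 < \cdots < b_k$ is an $a_{j'}$ with $j' \in I$ and $j' > j$, hence lies in $M_j$, so $(b_2, \dots, b_k) \in [M_j]^{k-1}$ and the membership of $\overline{b} = (a_j, b_2, \dots, b_k)$ in $\A$ is determined by $c_j = c$. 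Thus $[\M]^k \subset \A$ if $c = 1$, and $[\M]^k \cap \A = \varnothing$ if $c = 0$. The step requiring the most care is precisely this last verification together with the recursion: the whole argument hinges on the inclusions $M_i \subseteq M_{i-1} \setminus \{a_i\}$ ensuring that a color stabilized at stage $i$ applies to every admissible $(k-1)$-tuple drawn from the selected tail, which is the usual diagonalization subtlety in infinite Ramsey-type arguments.
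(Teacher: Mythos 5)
Your proof is correct. Note first that there is nothing in the paper to compare it against: the statement is quoted as a classical result with a citation to Ramsey's original paper, and no proof is given there, so a self-contained argument like yours is exactly what a blind attempt should produce. What you wrote is the standard textbook proof of the infinite Ramsey theorem for two colors: induction on $k$ with pigeonhole as the base case, and in the inductive step the usual diagonalization producing a nested sequence $M_0 \supseteq M_1 \supseteq \cdots$ with distinguished points $a_i = \min M_{i-1}$, stabilization of the color $c_i$ by a final pigeonhole, and the verification that the nesting $M_i \subseteq M_{i-1} \setminus \{a_i\}$ makes the color of any $k$-tuple from $\M = \{a_i ; i \in I\}$ depend only on its least element. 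Your final verification is carried out carefully and is the point where such proofs usually go wrong, so it is good that you spelled it out. One small step you use silently: the inductive hypothesis is stated for colorings of $[\N]^{k-1}$, but you apply it to colorings of $[M_{i-1} \setminus \{a_i\}]^{k-1}$ for an arbitrary infinite subset; this is legitimate via the increasing enumeration of $M_{i-1} \setminus \{a_i\}$, which identifies $[M_{i-1} \setminus \{a_i\}]^{k-1}$ with $[\N]^{k-1}$ in a color-preserving way, and it would be worth one sentence to say so explicitly.
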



We now introduce the two concentration properties we will study here, defined in \cite{fov}. The first one, called ``Hamming Full Concentration'' asks for some concentration condition for all pairs of elements of a subgraph, when the second one, called ``Hamming Interlaced Concentration'' only requires it for pairs of interlaced elements. The ``$d$'' in subscript in the acronyms below can be explained by the fact that sort of directional Lipschitz constants take part.

\begin{definition}
Let $(X,d)$ be a metric space, $\lambda > 0$, $p \in (1, \infty)$.  We say that $X$ has property \textit{$\lambda$-HFC$_{p,d}$} (resp. \textit{$\lambda$-HIC$_{p,d}$}) if, for every $k \in \N$ and every bounded function $f : [\N]^k \to X$, there exists $\M \in [\N]^{\omega}$ such that
\[ d(f(\overline{n}),f(\overline{m}) ) \leq \lambda \left( \sum_{j=1}^k \Lip_j(f)^p \right)^{\frac{1}{p}} \]
for all $\overline{n}, \overline{m} \in [\M]^k$ (resp. $(\overline{n}, \overline{m}) \in I_k(\M)$), where, for each $j \in \{1, \cdots, k\}$
\[ \Lip_j(f) = \sup\limits_{(\overline{n}, \overline{m}) \in H_j(\N)} d(f(\overline{n}),f(\overline{m})) . \]
We say that $X$ has property \textit{HFC$_{p,d}$} (resp. \textit{HIC$_{p,d}$}) if $X$ has property $\lambda$-HFC$_{p,d}$ (resp. $\lambda$-HIC$_{p,d}$), for some $\lambda > 0$. 
\end{definition}

It is important to note that Theorem 6.1 \cite{KR} and Theorem 2.4 \cite{LR} can be rephrased as follows: for $p \in (1, \infty)$, a reflexive (resp. quasi-reflexive) $p$-AUSable Banach space has property HFC$_{p,d}$ (resp. HIC$_{p,d}$). As mentionned by Causey \cite{Causey3.5}, these results still apply when $X$ is only assumed to have upper $\ell_p$ tree estimates (more precisely when $X$ has property $A_p$, defined for example in \cite{Causey3.5}). In order to get non-quasi-reflexive spaces with some concentration, the author proved that these properties are stable under $\ell_p$ sums. It is worth mentioning that these properties are stable under coarse Lipschitz embeddings when the embedded space is a Banach space and they clearly prevent the equi-Lipschitz embeddings of the Hamming graphs. As mentioned in \cite{fov}, they also prevent the equi-Lipschitz embeddings of the symmetric graphs.

\section{Stability under sums} \label{section sums}

In order to prove the stability of properties HFC$_{p,d}$ and HIC$_{p,d}$, $p \in (1, \infty)$, under $\ell_p$ sums, with the same constant, we will improve the idea from \cite{fov}. To do so, we need the following proposition, whose proof can be deduced from \cite{fov} but that we include for sake of completeness.

\begin{lemma}
Let $p \in (1, + \infty)$, $X_1$ and $X_2$ two Banach spaces, $X=X_1 \bigoplus\limits_p X_2$, $k \in \N$. \\
For every $\varepsilon>0$ and every Lipschitz map $h=(f,g) : [\N]^k \to X$, there exists $\M \in [\N]^\omega$ such that
\[ \Lip_j(f_{|[\M]^k})^p+\Lip_j(g_{|[\M]^k})^p \leq \Lip_j(h)^p + \varepsilon \]
for every $1 \leq j \leq k$.
\end{lemma}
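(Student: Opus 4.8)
The plan is to use Ramsey's theorem (Theorem~\ref{ramsey}) to pass to a subgraph on which, in each coordinate direction $j$, the distances $\|f(\overline{n})-f(\overline{m})\|$ and $\|g(\overline{n})-g(\overline{m})\|$ over pairs of $H_j$ are \emph{almost constant}. The obstacle to overcome is that, for $(\overline{n},\overline{m})\in H_j(\N)$, the defining property of the $\ell_p$-sum gives
\[ \|h(\overline{n})-h(\overline{m})\|^p = \|f(\overline{n})-f(\overline{m})\|^p + \|g(\overline{n})-g(\overline{m})\|^p, \]
but passing to suprema only yields $\Lip_j(h)^p \le \Lip_j(f)^p+\Lip_j(g)^p$, with equality failing in general because the supremum defining $\Lip_j(f)$ and the one defining $\Lip_j(g)$ need not be attained at the same pair. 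This mismatch is precisely what produces a loss of a factor $2$ (hence the ``$+2$'' in Theorem~\ref{thm somme E}), so the entire point is to restrict to an $\M$ on which both suprema are realised, up to $\varepsilon$, by a \emph{single common} pair.

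The key device is an encoding. A pair $(\overline{n},\overline{m})\in H_j(\M)$ is completely determined by one element of $[\M]^{k+1}$: given $\overline{w}=(w_1<\cdots<w_{k+1})\in[\M]^{k+1}$, set
\[ \overline{n}=(w_1,\dots,w_{j-1},w_j,w_{j+2},\dots,w_{k+1}), \qquad \overline{m}=(w_1,\dots,w_{j-1},w_{j+1},w_{j+2},\dots,w_{k+1}), \]
so that $\overline{n},\overline{m}$ agree off the $j$-th coordinate and differ there by $w_j<w_{j+1}$; this is a bijection between $[\M]^{k+1}$ and $H_j(\M)$. Since $h$ is Lipschitz and pairs in $H_j$ have Hamming distance $1$, all relevant distances are bounded by $R:=\Lip(h)<\infty$, and in particular $\Lip_j(f),\Lip_j(g)\le\Lip_j(h)\le R$. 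I would fix a small $\delta>0$, partition $[0,R]$ into finitely many intervals of length $\delta$, and for each $j\in\{1,\dots,k\}$ and each $\overline{w}\in[\N]^{k+1}$ record which interval contains $\|f(\overline{n})-f(\overline{m})\|$ and which contains $\|g(\overline{n})-g(\overline{m})\|$ (with $\overline{n},\overline{m}$ attached to $\overline{w}$ as above). Letting $j$ range over $\{1,\dots,k\}$, this defines a colouring of $[\N]^{k+1}$ with finitely many colours.

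Applying Ramsey's theorem (iterating the two-colour version of Theorem~\ref{ramsey} finitely many times, since the number of colours is finite) yields $\M\in[\N]^\omega$ on which this colouring is constant: for each $j$ there are constants $c_{f,j},c_{g,j}$ such that every pair $(\overline{n},\overline{m})\in H_j(\M)$ satisfies $\|f(\overline{n})-f(\overline{m})\|\in[c_{f,j},c_{f,j}+\delta)$ and $\|g(\overline{n})-g(\overline{m})\|\in[c_{g,j},c_{g,j}+\delta)$. Consequently $a_j:=\Lip_j(f_{|[\M]^k})\le c_{f,j}+\delta$ and $b_j:=\Lip_j(g_{|[\M]^k})\le c_{g,j}+\delta$, while \emph{any} single pair $(\overline{n},\overline{m})\in H_j(\M)$ already realises both quantities from below, namely $\|f(\overline{n})-f(\overline{m})\|\ge c_{f,j}\ge a_j-\delta$ and $\|g(\overline{n})-g(\overline{m})\|\ge b_j-\delta$.

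To conclude, fix any $(\overline{n},\overline{m})\in H_j(\M)$; then
\[ \Lip_j(h)^p \ge \|h(\overline{n})-h(\overline{m})\|^p = \|f(\overline{n})-f(\overline{m})\|^p+\|g(\overline{n})-g(\overline{m})\|^p \ge (a_j-\delta)_+^p+(b_j-\delta)_+^p. \]
Since $0\le a_j,b_j\le R$ and $(s,t)\mapsto s^p+t^p$ is uniformly continuous on $[0,R]^2$, choosing $\delta$ small enough (depending only on $p$, $R$, $\varepsilon$) forces $(a_j-\delta)_+^p+(b_j-\delta)_+^p\ge a_j^p+b_j^p-\varepsilon$ for every $j$ at once, which rearranges to the desired $\Lip_j(f_{|[\M]^k})^p+\Lip_j(g_{|[\M]^k})^p\le\Lip_j(h)^p+\varepsilon$. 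The main obstacle is conceptual rather than computational: one must identify the right combinatorial object for Ramsey's theorem so that the two directional suprema become attained at a common pair. The encoding of $H_j$-pairs by $(k+1)$-subsets is exactly what makes a single finite colouring of $[\N]^{k+1}$ govern all $k$ directions simultaneously, and this is what removes the factor-$2$ loss.
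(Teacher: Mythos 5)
Your proof is correct, and it reaches the paper's goal by a genuinely different mechanism. Both arguments turn on the same two points: the identification of $H_j(\M)$ with $[\M]^{k+1}$ (so that Ramsey's theorem applies to pairs), and the need to make the suprema defining $\Lip_j(f)$ and $\Lip_j(g)$ nearly attained at a \emph{common} pair. The paper achieves this with a minimax device: it performs $2k$ nested extractions, defining $\alpha_j=\inf_{\M'}\sup_{(\overline{n},\overline{m})\in H_j(\M')}\|f(\overline{n})-f(\overline{m})\|$ and the analogous $\beta_j$ for $g$, and then proves $\alpha_j^p+\beta_j^p\le\Lip_j(h)^p$ by contradiction: if this failed, every pair in $H_j(\M)$ would have either a small $f$-increment or a small $g$-increment, one application of the two-colour Ramsey theorem would make one alternative uniform on a further subset, and this would contradict the minimality of $\alpha_j$ or of $\beta_j$. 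You instead discretize $[0,\Lip(h)]$ into $\delta$-intervals and colour $[\N]^{k+1}$ by the $k$-tuple of interval-pairs, so that a single application of the finite-colour Ramsey theorem (obtained by iterating Theorem \ref{ramsey}) makes every directional increment of $f$ and of $g$ nearly constant in all $k$ directions simultaneously; the inequality then follows directly from one common pair together with the uniform continuity of $(s,t)\mapsto s^p+t^p$ on $[0,\Lip(h)]^2$, with $\delta$ legitimately chosen before the extraction since it depends only on $p$, $\Lip(h)$, $\varepsilon$. What each buys: your route is direct (no contradiction), handles all $k$ directions in one Ramsey extraction, and yields a stronger structural conclusion (near-constancy of all increments, not just control of the suprema); the paper's route avoids all discretization and modulus-of-continuity bookkeeping--the inf-sup quantities stabilize automatically under passing to subsets--and invokes Ramsey only in its stated two-colour form, this inf-sup device being the standard Kalton--Randrianarivony-style tool in this literature. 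One marginal remark: your attribution of the ``$+2$'' in Theorem \ref{thm somme E} to the sup-mismatch in this lemma is not quite accurate, since that loss comes from the treatment of the tail term in the proof of the sum theorem (which is also why the hypothesis $\lambda\ge 2$ appears in Theorem \ref{thm somme}); but this is motivational commentary and does not affect the validity of your argument.
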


\begin{proof}
Let $\varepsilon > 0$ and $h=(f,g) : ([\N]^k,\dH) \to X$ a Lipschitz map. \\
Let $\alpha_1=\inf\limits_{ \M_1 \in [\N]^{\omega}} \sup\limits_{(\overline{n},\overline{m}) \in H_1(\M_1)} \|f(\overline{n})-f(\overline{m})\|$. There exists $\M_1 \in [\M]^{\omega}$ so that $\|f(\overline{n})-f(\overline{m})\|^p \leq \alpha_1^p+ \frac{\varepsilon}{2}$ for every $(\overline{n},\overline{m}) \in H_1(\M_1)$. \\
Let $\beta_1=\inf\limits_{ \M_1' \in [\M_1]^{\omega}} \sup\limits_{(\overline{n},\overline{m}) \in H_1(\M_1')} \|g(\overline{n})-g(\overline{m})\|$. There exists $\M_1' \in [\M_1]^{\omega}$ so that $\|g(\overline{n})-g(\overline{m})\|^p \leq \beta_1^p+ \frac{\varepsilon}{2}$ for every $(\overline{n},\overline{m}) \in H_1(\M_1')$. \\
\begin{center} $\vdots$ \end{center} 
Let $\alpha_k=\inf\limits_{ \M_k \in [\M_{k-1}']^{\omega}} \sup\limits_{(\overline{n},\overline{m}) \in H_k(\M_k)} \|f(\overline{n})-f(\overline{m})\|$. There exists $\M_k \in [\M_{k-1}']^{\omega}$ so that $\|f(\overline{n})-f(\overline{m})\|^p \leq \alpha_k^p+ \frac{\varepsilon}{2}$ for every $(\overline{n},\overline{m}) \in H_k(\M_k)$. \\
Let $\beta_k=\inf\limits_{ \M_k' \in [\M_k]^{\omega}} \sup\limits_{(\overline{n},\overline{m}) \in H_k(\M_k')} \|g(\overline{n})-g(\overline{m})\|$. There exists $\M \in [\M_k]^{\omega}$ so that $\|g(\overline{n})-g(\overline{m})\|^p \leq \beta_k^p+ \frac{\varepsilon}{2}$ for every $(\overline{n},\overline{m}) \in H_k(\M)$. \\
Let us show that 
\[ \Lip_j(f_{|[\M]^k})^p+\Lip_j(g_{|[\M]^k})^p \leq \Lip_j(h)^p + \varepsilon \] 
for every $1 \leq j \leq k$. It is enough to show that $\alpha_j^p+\beta_j^p \leq \Lip_j(h)^p$. \\
For that, assume that it is not the case. Then, there exists $\eta > 0$ so that $(\alpha_j-\eta)^p + (\beta_j-\eta)^p > \Lip_j(h)^p$. If there exists $(\overline{n}, \overline{m}) \in H_j(\M)$ such that $\|f(\overline{n})-f(\overline{m})\| \geq \alpha_j- \eta$ and $\|g(\overline{n})-g(\overline{m})\| \geq \beta_j- \eta$, then $\| h(\overline{n})-h(\overline{m})\| > \Lip_j(h)$, which is impossible. So $\|f(\overline{n})-f(\overline{m})\| \leq \alpha_j- \eta$ or $\|g(\overline{n})-g(\overline{m})\| \leq \beta_j- \eta$ for all $(\overline{n}, \overline{m}) \in H_j(\M)$. Now we note that $H_j(\M)$ can be identified with $[\M]^{k+1}$ so, by Theorem \ref{ramsey}, we get $\M' \in [\M]^{\omega}$ such that $\|f(\overline{n})-f(\overline{m})\| \leq \alpha_j- \eta$ for all $(\overline{n}, \overline{m}) \in H_j(\M')$ or $\|g(\overline{n})-g(\overline{m})\| \leq \beta_j- \eta$ for all $(\overline{n}, \overline{m}) \in H_j(\M')$. This contradicts the definition of $\alpha_j$ or $\beta_j$ and finishes the proof.
\end{proof}

We can now prove our main result. Before doing so, we want to recall some facts about Kalton and Randrianarivony's result from \cite{KR}. If $f : [\N]^k \to \ell_p$ is a bounded map, they proved the existence of $u \in \ell_p$ so that, for every $\ee>0$, there exists $\M \in [\N]^\omega$ so that 
\[ \forall \nb \in [\M]^k, \ \|f(\nb)-u\| \leq \Big( \sum_{j=1}^k \Lip_j(f)^p \Big)^{1/p}+\ee . \]
By looking carefully at their proof, one can note more precisely that the $u$ is given by $u=\omega-\lim\limits_{n_1 \in \M \to \infty} \ldots \lim\limits_{n_k \in \M \to \infty} f(\nb)$ where $\M \in [\N]^\omega$ is such that the limits exist.

\begin{proof}[Proof of Theorem \ref{thm somme}]
Let $\varepsilon > 0$, $k \in \N$, $f : ([\N]^k,\dH) \to X$ a Lipschitz function. Let $\varepsilon''>0$ small enough so that 
\[ (\lambda+\varepsilon'')^p \sum_{j=1}^k \Lip_j(f)^p + k(\lambda+\varepsilon'')^p \varepsilon'' +\varepsilon'' \leq (\lambda+\varepsilon)^p \sum_{j=1}^k \Lip_j(f)^p \]
and $\varepsilon'>0$ small enough so that
\[ \left[ 2\left(\sum_{j=1}^k \Lip_j(f)^p \right)^{\frac{1}{p}}+ 4\varepsilon' \right]^p \leq 2^p \sum_{j=1}^k \Lip_j(f)^p +\varepsilon'' \]
The well-defined map \[ \phi : \left\lbrace \begin{array}{lll}
X & \to & \ell_p \\
(x_n)_{n \in \N} & \mapsto & (\|x_n\|)_{n \in \N}
\end{array} \right. \] 
satisfies $\Lip(\phi) \leq 1$ and $\| \phi(x)\|=\|x\|$ for all $x \in X$, thus
\[ \sup\limits_{(\overline{n},\overline{m}) \in H_j(\N)} \| \phi \circ f(\overline{n})-\phi \circ f (\overline{m}) \| \leq \Lip_j(f) \]
for every $j \in \{1, \cdots, k\}$. From Kalton-Randrianarivony's Theorem \cite{KR}, we get $u \in \ell_p$ and $\M'' \in [\N]^{\omega}$ such that 
\[ \| \phi \circ f(\overline{n})-u \| \leq \left( \sum_{j=1}^k \Lip_j(f)^p \right)^{\frac{1}{p}}+ \varepsilon' \]
for all $\overline{n} \in [\M'']^k$. Let $N \in \N$ be such that $\sum_{k=N+1}^{\infty} |u_k|^p \leq \varepsilon'^p$. Let us denote by $P_N$ the projection from $\ell_p$ onto $\vspan\{e_i,1 \leq i \leq N\}$ and by $\Pi_N$ the projection from $X$ onto $\left( \sum_{i=1}^N X_i \right)_{\ell_p}$. According to the lemma, there exists $\tilde{\M} \in [\M'']^\omega$ such that 
\[ \forall 1 \leq j \leq k, \ \Lip_j(\Pi_N \circ f_{|[\tilde{\M}]^k})^p+\Lip_j((I-\Pi_N) \circ f_{|[\tilde{\M}]^k})^p \leq \Lip_j(f)^p + \varepsilon'' \]
If we denote by $v=(I-P_N)(u)$, by the remark preceding this proof, there exists $\M' \in [\tilde{\M}]^\omega$ such that
\[ \| \phi \circ (I-\Pi_N) \circ f(\nb)- v\|=\|(I-P_N) \circ \phi \circ f(\nb)-v\| \leq \left( \sum_{j=1}^k \Lip_j((I-\Pi_N)\circ f_{|[\tilde{\M}]^k})^p \right)^{\frac{1}{p}}+ \varepsilon' \]
for all $\overline{n} \in [\M']^k$. According to Proposition 3.1 from \cite{fov}, there exists $\M \in [\M']^\omega$ such that 
\[ \| \Pi_N \circ f(\overline{n})-\Pi_N \circ f(\overline{m}) \| \leq (\lambda+\varepsilon'') \left( \sum_{j=1}^k \Lip_j(\Pi_N \circ f_{|[\tilde{\M}]^k})^p \right)^{\frac{1}{p}} \]
for all $\nb, \mb \in [\M]^k$ (resp. $(\overline{n}, \overline{m}) \in I_k(\M)$). Then, for all $\nb, \mb \in [\M]^k$ (resp. $(\overline{n}, \overline{m}) \in I_k(\M)$), we have
\begin{align*}
\|f(\overline{n})-f(\overline{m})\|^p & =\| \Pi_N(f(\overline{n})-f(\overline{m})) \|^p + \| (I-\Pi_N) \circ f(\overline{n})- (I-\Pi_N) \circ f(\overline{m}) \|^p \\
& \leq \| \Pi_N(f(\overline{n})-f(\overline{m})) \|^p + [\| (I-\Pi_N) \circ f(\overline{n})\|  + \| (I-\Pi_N) \circ f(\overline{m}) \|]^p \\
&= \| \Pi_N(f(\overline{n})-f(\overline{m})) \|^p + [\| \phi \circ (I-\Pi_N) \circ f(\overline{n})\|  + \| \phi \circ (I-\Pi_N) \circ f(\overline{m}) \|]^p \\
& \leq \| \Pi_N(f(\overline{n})-f(\overline{m})) \|^p + [\| \phi \circ (I-\Pi_N) \circ f(\overline{n})-v\|  + \\
& \hspace{5mm} \| \phi \circ (I-\Pi_N) \circ f(\overline{m})-v \|+2\|v\|]^p \\
& \leq (\lambda+\varepsilon'')^p \sum_{j=1}^k \Lip_j(\Pi_N \circ f_{|[\tilde{\M}]^k})^p + \\
& \hspace{5mm} \left[ 2\left(\sum_{j=1}^k \Lip_j((I-\Pi_N)\circ f_{|[\tilde{\M}]^k})^p \right)^{\frac{1}{p}}+ 4\varepsilon' \right]^p \\
& \leq (\lambda+\varepsilon'')^p \sum_{j=1}^k \Lip_j(\Pi_N \circ f_{|[\tilde{\M}]^k})^p + 2^p \sum_{j=1}^k \Lip_j((I-\Pi_N)\circ f_{|[\tilde{\M}]^k})^p +\varepsilon'' \\
& \text{(since $x \in \R^+ \mapsto (x+4\varepsilon')^p-x^p$ is non-decreasing)} \\
& \leq (\lambda+\varepsilon'')^p \sum_{j=1}^k [\Lip_j(\Pi_N \circ f_{|[\tilde{\M}]^k})^p +  \Lip_j((I-\Pi_N)\circ f_{|[\tilde{\M}]^k})^p] + \varepsilon''
\end{align*}
since $\lambda \geq 2$. \\
Thus, for all $(\overline{n}, \overline{m}) \in I_k(\M)$, we get
\[ \|f(\nb)-f(\mb)\|^p \leq (\lambda+\varepsilon'')^p \sum_{j=1}^k \Lip_j(f)^p + k(\lambda+\varepsilon'')^p \varepsilon'' +\varepsilon'' \leq (\lambda+\varepsilon)^p \sum_{j=1}^k \Lip_j(f)^p \]
by choice of $\varepsilon''$.
\end{proof}


\section{Applications}

\subsection{A counterexample}

In this subsection, we answer by the negative the following question: is a Banach space with some property HIC$_{p,d}$, $p \in (1, \infty)$, necessarily AUSable? We first start with an elementary lemma, which strengthens the intuition that the concentration properties are asymptotic ones.

\begin{lemma} \label{lemma R}
Let $\lambda>0$, $p \in (1, \infty)$ and $X$ be a Banach space with a finite codimensional subspace $Y$ that has property $\lambda$-HFC$_{p,d}$ (resp. $\lambda$-HIC$_{p,d}$). Then $X$ has $(\lambda+\varepsilon)$-HFC$_{p,d}$ (resp. $(\lambda+\varepsilon)$-HIC$_{p,d}$) for every $\varepsilon>0$.
\end{lemma}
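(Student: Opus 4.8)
The plan is to exploit the fact that a finite-codimensional subspace differs from the whole space only by a finite-dimensional piece, on which any bounded map has relatively compact range and can therefore be made essentially constant along a Ramsey subset. Since $Y$ is a closed finite-codimensional subspace, it admits a closed finite-dimensional complement $Z$, so that $X=Y\oplus Z$ with associated bounded linear projections $P_Y$ and $P_Z$. Fixing $k\in\N$ and a bounded map $f:[\N]^k\to X$, I would split it as $f=g+h$ with $g=P_Y\circ f:[\N]^k\to Y$ and $h=P_Z\circ f:[\N]^k\to Z$, both of which are bounded.

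First I would make the finite-dimensional component $h$ almost constant. For a prescribed $\delta>0$, the range of $h$ is a bounded, hence totally bounded, subset of $Z$, so it is covered by finitely many balls of radius $\delta/2$. Colouring each $\nb\in[\N]^k$ by the index of a ball containing $h(\nb)$ and applying the finite-colour version of Ramsey's theorem (Theorem~\ref{ramsey}, iterated over the colours) yields $\M_0\in[\N]^\omega$ on which the colouring is constant; thus
\[ \|h(\nb)-h(\mb)\|\leq\delta\qquad\text{for all }\nb,\mb\in[\M_0]^k. \]

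Next I would transfer the concentration of $Y$ to $g$. Since $([\M_0]^k,\dH)$ is canonically isometric to $([\N]^k,\dH)$ via the increasing bijection $\N\to\M_0$, and this identification respects the sets $H_j$ and $I_k$, property $\lambda$-HFC$_{p,d}$ (resp. $\lambda$-HIC$_{p,d}$) of $Y$ applied to $g_{|[\M_0]^k}$ produces $\M\in[\M_0]^\omega$ with
\[ \|g(\nb)-g(\mb)\|\leq\lambda\Big(\sum_{j=1}^k\Lip_j(g_{|[\M_0]^k})^p\Big)^{1/p} \]
for all $\nb,\mb\in[\M]^k$ (resp. $(\nb,\mb)\in I_k(\M)$). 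The decisive point is to bound these directional constants without paying the norm of the projection: for $(\nb,\mb)\in H_j(\M_0)\subseteq H_j(\N)$ one has $g(\nb)-g(\mb)=(f(\nb)-f(\mb))-(h(\nb)-h(\mb))$, whence $\Lip_j(g_{|[\M_0]^k})\leq\Lip_j(f)+\delta$. Combining this with the triangle inequality $\|f(\nb)-f(\mb)\|\leq\|g(\nb)-g(\mb)\|+\delta$ gives, on $[\M]^k$ (resp. on $I_k(\M)$),
\[ \|f(\nb)-f(\mb)\|\leq\lambda\Big(\sum_{j=1}^k(\Lip_j(f)+\delta)^p\Big)^{1/p}+\delta. \]

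The main obstacle is precisely this control of the Lipschitz constants: a direct decomposition through $P_Y$ would introduce the factor $\|P_Y\|$ and destroy the constant, so the gain comes from replacing the projection estimate by the observation that $h$ contributes only an additive error $\delta$, which we may choose as small as we like. It then remains to let $\delta\to0$: when $\sum_{j}\Lip_j(f)^p>0$ the right-hand side above converges to $\lambda\big(\sum_j\Lip_j(f)^p\big)^{1/p}$, so for $\delta$ small enough it is at most $(\lambda+\ee)\big(\sum_j\Lip_j(f)^p\big)^{1/p}$; and in the degenerate case $\sum_j\Lip_j(f)^p=0$ the connectedness of the Hamming graph under single-coordinate moves forces $f$ to be constant, so the bound holds trivially. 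The argument is identical in the full (HFC) and interlaced (HIC) versions, only the index set of admissible pairs changing.
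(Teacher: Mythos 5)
Your proof is correct and takes essentially the same approach as the paper's: project onto $Y$, use Ramsey's theorem together with compactness of bounded sets in the finite-dimensional complement to make the complementary component of $f$ vary by at most $\delta$ on a subgraph, deduce $\Lip_j(P_Y\circ f)\leq \Lip_j(f)+\delta$ there, apply the hypothesis on $Y$, and finish with the triangle inequality for $\delta$ small. The only differences are cosmetic: you spell out the colouring/Ramsey step and the degenerate constant case explicitly, whereas the paper fixes the small parameter up front and assumes $f$ non-constant.
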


\begin{proof}
Let $\varepsilon>0$, $f : ([\N]^k,\dH) \to X$ be a non-constant Lipschitz map and $P : X \to Y$ be a bounded linear projection. Let $\varepsilon'>0$ so that 
\[ \lambda \Big( \sum_{j=1}^k (\Lip_j(f)+\varepsilon')^p \Big)^{1/p}+\ee' \leq (\lambda+\ee) \Big( \sum_{j=1}^k \Lip_j(f)^p \Big)^{1/p} . \]
It follows from Ramsey’s theorem \ref{ramsey} and the norm compactness of bounded sets in finite dimensional spaces that there exists $\M \in [\N]^\omega$ such that
\[ \forall \nb, \mb \in [\M]^k, \ \|(I-P)(f(\nb)-f(\mb))\| \leq \varepsilon' . \]
Therefore $P \circ f : [\N]^k \to Y$ satisfies $\Lip_j(P \circ f) \leq \Lip_j(f)+\varepsilon'$ for every $j \in \{1, \ldots, k \}$. Applying our hypothesis on $Y$, we get $\M' \in [\M]^\omega$ so that
\[ \|P \circ f(\nb)-P \circ f(\mb) \| \leq \lambda \Big( \sum_{j=1}^k (\Lip_j(f)+\varepsilon')^p \Big)^{1/p} \]
for all $\nb, \mb \in [\M']^k$ (resp. $(\nb,\mb) \in I_k(\M')$). Then, by choice of $\varepsilon'$,
\[ \|f(\nb)-f(\mb)\| \leq \|P \circ f(\nb)-P \circ f(\mb) \| + \|(I-P)(f(\nb)-f(\mb))\| \leq (\lambda+\ee) \Big( \sum_{j=1}^k \Lip_j(f)^p \Big)^{1/p} \]
for all $\nb, \mb \in [\M']^k$ (resp. $(\nb,\mb) \in I_k(\M')$).
\end{proof}

Let $p \in (1, \infty)$. We will define spaces by transfinite induction. Set $X_p^0=\R \oplus_1 \ell_p$, $X_p^{\alpha+1}=\R \oplus_1 \ell_p(X_p^\alpha)$ for every ordinal $\alpha$ and $X_p^\alpha= \R \oplus_1 \Big(\sum_{\beta<\alpha} X_p^\beta)_{\ell_p}$ if $\alpha$ is a limit ordinal. They satisfy $Sz(X_p^\alpha) > \alpha$ for every $\alpha<\omega_1$. Indeed, one can show with a transfinite induction that $(1,0) \in l_1^\alpha(B_{(X^\alpha_p)^*})$ for every ordinal $\alpha$. Therefore, we can deduce from Theorem \ref{thm somme} the following corollary, that answers a question from \cite{fov}.

\begin{corollary} \label{cor AUS}
For every ordinal $\alpha<\omega_1$ and $p \in (1, \infty)$, there exists a Banach space with property HFC$_{p,d}$ and Szlenk index bigger than $\alpha$. By Theorem \ref{thm Sz}, property HFC$_{p,d}$ for some $p \in (1, \infty)$ does not imply being AUSable.
\end{corollary}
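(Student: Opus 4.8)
The plan is to use the spaces $X_p^\alpha$ just defined as witnesses. The lower bound $Sz(X_p^\alpha)>\alpha$ is already supplied by the transfinite computation showing $(1,0)\in l_1^\alpha(B_{(X_p^\alpha)^*})$, so the only thing left to establish is that each $X_p^\alpha$ enjoys property HFC$_{p,d}$; the closing sentence will then be immediate from Theorem \ref{thm Sz}. I would prove the concentration by transfinite induction on $\alpha<\omega_1$, carrying the sharpened inductive hypothesis that $X_p^\alpha$ has property $(2+\varepsilon)$-HFC$_{p,d}$ for \emph{every} $\varepsilon>0$, rather than merely for some constant. Recording the statement in this uniform ``$2+\varepsilon$ for all $\varepsilon$'' form is the crux, and it is exactly what the no-loss Theorem \ref{thm somme} makes available.

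For the base case $\alpha=0$, Kalton and Randrianarivony's theorem applied to $\ell_p$ produces, for each bounded $f:[\N]^k\to\ell_p$ and each $\varepsilon'>0$, a point $u$ and an infinite set on which $\|f(\nb)-u\|\le(\sum_j\Lip_j(f)^p)^{1/p}+\varepsilon'$; the triangle inequality then gives property $(2+\varepsilon)$-HFC$_{p,d}$ for $\ell_p$. Since $\ell_p$ is finite-codimensional in $X_p^0=\R\oplus_1\ell_p$, Lemma \ref{lemma R} upgrades this to $(2+\varepsilon)$-HFC$_{p,d}$ for $X_p^0$. For the successor step, the inductive hypothesis gives $X_p^\alpha$ property $\lambda$-HFC$_{p,d}$ with $\lambda=2+\varepsilon/3\ge 2$, so Theorem \ref{thm somme}, applied to the constant sequence of copies of $X_p^\alpha$, shows $\ell_p(X_p^\alpha)=(\sum_n X_p^\alpha)_{\ell_p}$ has $(\lambda+\varepsilon/3)$-HFC$_{p,d}$; a final application of Lemma \ref{lemma R} to strip off the one-dimensional $\R$ summand yields $(2+\varepsilon)$-HFC$_{p,d}$ for $X_p^{\alpha+1}$. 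The limit step is identical once one notes that for a limit $\alpha<\omega_1$ the index set $\{\beta:\beta<\alpha\}$ is countable, so $(\sum_{\beta<\alpha}X_p^\beta)_{\ell_p}$ really is a sequence of summands to which Theorem \ref{thm somme} applies, and the inductive hypothesis furnishes the single constant $\lambda=2+\varepsilon/3\ge 2$ simultaneously for all the $X_p^\beta$.

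The main obstacle, and the reason the sharper Theorem \ref{thm somme} is needed here rather than the earlier Theorem \ref{thm somme E}, is the control of the concentration constant across the transfinite induction. With a fixed loss of $+2$ at each $\ell_p$-sum the constant would march off to infinity along the successors and could not even be assembled at limit ordinals; the no-loss bound instead lets me carry the \emph{same} band $(2+\varepsilon)$-HFC$_{p,d}$, valid for all $\varepsilon>0$, through every countable ordinal, which is precisely what permits a uniform choice of $\lambda\ge 2$ for the summands at each stage (a point requiring care at limit ordinals). Granting the asserted Szlenk computation, this proves the first assertion. For the second, each $X_p^\alpha$ is separable, so the $l_\varepsilon$-derivation indeed computes $Sz$; taking $\alpha=\omega$ (or any $\alpha\ge\omega$) gives a space $X_p^\omega$ with property HFC$_{p,d}$ and $Sz(X_p^\omega)>\omega$, whence $X_p^\omega$ is not AUSable by Theorem \ref{thm Sz}, so property HFC$_{p,d}$ does not imply being AUSable.
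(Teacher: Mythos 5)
Your proposal is correct and is essentially the argument the paper intends: the paper deduces the corollary from Theorem \ref{thm somme} by exactly this transfinite induction on the spaces $X_p^\alpha$, with Lemma \ref{lemma R} absorbing the $\R$ summands and the Kalton--Randrianarivony concentration for $\ell_p$ as the base case. Your explicit emphasis on carrying the uniform ``$(2+\varepsilon)$-HFC$_{p,d}$ for every $\varepsilon>0$'' hypothesis through successor and limit ordinals (which is precisely why the no-loss constant $\lambda+\varepsilon$ of Theorem \ref{thm somme}, rather than $\lambda+2+\varepsilon$, is needed) is a faithful and correctly detailed rendering of the deduction the paper leaves implicit.
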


In particular, for every ordinal $\alpha<\omega_1$, one gets a first example of a Banach space with Szlenk index bigger than $\alpha$ that does not equi-Lipschitz contain the Hamming graphs (nor the symmetric graphs). We can also deduce from Corollary \ref{cor AUS} that there is no separable Banach space that contains an isomorphic copy of every separable Banach space $X$ with property HFC$_{p,d}$. Let us finish with subsection with a last remark. In the local setting, a Banach space $X$ has non-trivial Rademacher type if and only if $\ell_1$ is not finitely representable in $X$ if and only if it does not equi-Lipschitz contain the Hamming cubes \cite{BMW} \cite{Pisier_sur_les_espaces}. In \cite{DKC}, an asymptotic version of Rademacher type $p$, $p \in (1, \infty)$ is introduced and the following is proved: a banach space has non-trivial asymptotic Rademacher type if and only if $\ell_1$ is not asymptotically finitely representable in $X$, \ie if $\ell_1$ is not in its asymptotic structure. To ease the reading, we will not recall the definition of asymptotic structure but we will mention that $\ell_1$ happens to be in the asymptotic structure of $X_p^\omega$ (see \cite{BLMSJIMJ2021}). Therefore, with the definition of asymptotic Rademacher type from \cite{DKC}, contrary to what happens in the local setting, there exist Banach spaces with trivial asymptotic Rademacher type that do not equi-Lipschitz contain the Hamming graphs.

\subsection{A complexity result}

Let us denote by $C(\Delta)$ the space of continuous real-valued functions on the Cantor set $\Delta$. Since this space is universal for the class of separable Banach spaces, we can code that class by $\SB=\{ X \subset C(\Delta), X \text{ is a closed linear subspace of } C(\Delta)\}$. As $C(\Delta)$ is a Polish space, we can endow the set $\mathcal{F} (C(\Delta))$ of its closed non-empty subsets with the Effros-Borel structure (see Chapter 2 from \cite{Dodos} or \cite{Bos}). Hence we can talk about Borel and (completely) co-analytic classes of separable Banach spaces. We refer to the textbook \cite{Kechris} for the definitions and a complete exposition of descriptive set theory. We will just recall that an \textit{analytic set} is a continuous image of a Polish space into another Polish space, a \textit{co-analytic set} is a set whose complement is analytic and an analytic (resp. co-analytic) subset $A$ of a standard Borel space $X$ is said to be \textit{completely analytic} (resp. \textit{completely co-analytic}) if for each standard Borel space $Y$ and each $B \subset Y$ analytic (resp. co-analytic), one can find a Borel function $f : X \to Y$ so that $f^{-1}(B)=A$. The following well-known lemma will be used in the proof of Proposition \ref{prop complexity}.

\begin{lemma}[Kutarowski-Ryll-Rardzewski selection principle (see Theorem (12.13) \cite{Kechris})] \label{lemme KRR}
Let $X$ be a Polish space, $\mathcal{F}(X)$ the set of its non-empty closed sets. \\ 
There exists a sequence of Borel functions $(d_n)_{n \in \N} : \mathcal{F}(X) \to X$ such that $(d_n(F))_{n \in \N}$ is dense in $F$, for every non-empty closed subset $F \subset X$.
\end{lemma}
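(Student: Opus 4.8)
The plan is to establish the selection principle in two stages: first I would build a single Borel selector $s : \mathcal{F}(X) \to X$ with $s(F) \in F$, and then ``localize'' it to produce the whole dense sequence. Throughout I fix a complete compatible metric $d$ on $X$ with $d \leq 1$, a countable dense sequence $(x_k)_{k \in \N}$ in $X$, and a countable basis $(U_n)_{n \in \N}$ of nonempty open sets. Recall that the Effros--Borel structure on $\mathcal{F}(X)$ is generated by the sets $\{F : F \cap U \neq \varnothing\}$ for $U$ open; in particular, for each fixed $x \in X$ the map $F \mapsto d(x,F) = \inf_{y \in F} d(x,y)$ is Borel, since $\{F : d(x,F) < r\} = \{F : F \cap B(x,r) \neq \varnothing\}$.

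For the single selector, I would construct Borel functions $F \mapsto k_n(F) \in \N$ recursively so that $y_n(F) := x_{k_n(F)}$ satisfies $d(y_n(F), F) < 2^{-(n+1)}$ and $d(y_{n+1}(F), y_n(F)) < 2^{-n}$. At step $0$, let $k_0(F)$ be the least $k$ with $d(x_k, F) < 2^{-1}$; such a $k$ exists by density, and $F \mapsto k_0(F)$ is Borel by the remark above. At step $n+1$, conditioning on the (countably many, Borel) possible values $j$ of $k_n(F)$, let $k_{n+1}(F)$ be the least $k$ with $d(x_k, F) < 2^{-(n+2)}$ and $d(x_k, x_j) < 2^{-n}$; a triangle-inequality argument through a point of $F$ within $2^{-(n+1)}$ of $y_n(F)$ shows such a $k$ exists, while on each Borel piece $\{F : k_n(F) = j\}$ the first condition is Borel in $F$ and the second is a fixed numerical condition. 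The sequence $(y_n(F))_n$ is then Cauchy, so $s(F) := \lim_n y_n(F)$ exists, lies in $F$ because $d(y_n(F),F) \to 0$ and $F$ is closed, and is Borel as a pointwise limit of Borel functions.

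To pass to a dense sequence, the key observation is that for each open $U$ the map $F \mapsto \overline{F \cap U}$ is Borel on the Borel set $D_U := \{F : F \cap U \neq \varnothing\}$ and takes nonempty closed values there. This holds because, for any open $V$, one has $\overline{F \cap U} \cap V \neq \varnothing$ if and only if $F \cap (U \cap V) \neq \varnothing$ (a point of the closure lying in the open set $V$ forces the set itself to meet $V$), and the latter condition is Effros--Borel. I would then set $d_n(F) = s\big(\overline{F \cap U_n}\big)$ when $F \in D_{U_n}$ and $d_n(F) = s(F)$ otherwise; each $d_n$ is Borel as a composition of Borel maps on a Borel domain. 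Given any nonempty closed $F$, a point $z \in F$ and $\varepsilon > 0$, choosing a basic $U_n \ni z$ with $U_n \subseteq B(z, \varepsilon)$ gives $F \in D_{U_n}$ and $d_n(F) \in \overline{F \cap U_n} \subseteq F \cap \overline{B(z,\varepsilon)}$, so that $d_n(F) \in F$ with $d(d_n(F), z) \leq \varepsilon$; hence $(d_n(F))_n$ is dense in $F$.

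The analytic content here is elementary; the only real work, and the step I expect to be the main obstacle, is the descriptive-set-theoretic bookkeeping, namely verifying that the recursively defined indices $k_n$ and the localized-closure maps $F \mapsto \overline{F \cap U_n}$ are genuinely Borel with respect to the Effros--Borel structure. Once these measurability checks are in place, both the construction of $s$ and the density of $(d_n(F))_n$ follow immediately.
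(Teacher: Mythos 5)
The paper offers no proof of this lemma: it is the classical Kuratowski--Ryll-Nardzewski selection principle, quoted directly from Kechris (Theorem (12.13)) and used as a black box in the proof of Proposition \ref{prop complexity}. Your argument is correct, and it is essentially the standard textbook proof of that theorem: a single Borel selector $s$ built by successive approximation along a countable dense sequence, then localized through the Borel maps $F \mapsto \overline{F \cap U_n}$ on $\{F : F \cap U_n \neq \varnothing\}$; the measurability checks you single out (Borelness of $F \mapsto d(x,F)$, of the countably-valued indices $k_n$, and of the localized-closure maps) all go through exactly as you describe, so there is no gap.
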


As a consequence of Corollary \ref{cor AUS}, since Bossard \cite{Bos} proved that the map $X \mapsto Sz(X)$ is a co-analytic rank on the set of spaces with separable dual, the set of spaces with separable dual satisfying HFC$_{p,d}$, $p \in (1, \infty)$ is not Borel (see for example Theorem A.2 \cite{Dodos}). In fact, we even have:

\begin{prop} \label{prop complexity}
Let $p \in (1, \infty)$. The set of separable Banach spaces satisfying HFC$_{p,d}$ is completely co-analytic. 
\end{prop}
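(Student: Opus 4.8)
The plan is to establish complete co-analyticity by producing a Borel reduction of a known completely co-analytic set into the set of separable Banach spaces with property HFC$_{p,d}$. The natural candidate for the source is the set of countable well-founded trees (equivalently, the standard complete co-analytic set $\mathrm{WF}$ of well-founded subtrees of $\N^{<\omega}$), or alternatively the set of spaces with separable dual viewed through the co-analytic rank $X \mapsto Sz(X)$ provided by Bossard. Since the preamble already notes that $X \mapsto Sz(X)$ is a co-analytic rank on the class of spaces with separable dual, and since Corollary~\ref{cor AUS} exhibits, for each $\alpha < \omega_1$, a space with HFC$_{p,d}$ and Szlenk index exceeding $\alpha$, the boundedness of co-analytic ranks is exactly the mechanism that forces non-Borelness; to upgrade non-Borel to \emph{completely} co-analytic one must show the set is co-analytic and then exhibit an explicit Borel reduction.

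First I would verify that the set $\mathcal{H}_p = \{ X \in \SB : X \text{ has HFC}_{p,d}\}$ is co-analytic. Using Lemma~\ref{lemme KRR}, fix Borel selectors $(d_n)_{n}$ giving a dense sequence in each $X \in \SB$; then for a fixed $k$, a fixed bounded function $f : [\N]^k \to X$ can be coded by rational data on the countable dense set, the quantities $\Lip_j(f)$ become Borel functions of this coding, and the assertion ``there exists $\M \in [\N]^\omega$ such that the HFC inequality holds for all $\overline{n},\overline{m} \in [\M]^k$'' is an existential quantifier over $[\N]^\omega$ followed by a Borel (indeed closed) condition, hence analytic. The property HFC$_{p,d}$ with the quantifier $\exists \lambda > 0 \ \forall k \ \forall f \ \exists \M$ must be arranged so the overall complexity is co-analytic: the universal quantifiers over $k$ and over $f$ are the source of the $\Pi^1_1$ complexity, while the inner $\exists \M$ is absorbed. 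The delicate bookkeeping is to confirm that quantifying $\exists \lambda$ rationally and handling the supremum defining $\Lip_j$ does not push the class beyond $\Pi^1_1$; I expect this to reduce to a careful counting of quantifier alternations over Polish spaces.

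The main step, and the hard part, is the Borel reduction itself. I would take the transfinite family $X_p^\alpha$ from the proof of Corollary~\ref{cor AUS} and show that the assignment $\alpha \mapsto X_p^\alpha$ can be performed in a Borel way when $\alpha$ is replaced by a tree $T$ coding an ordinal: given a tree $T \in \N^{<\omega}$, one builds a space $\mathcal{X}_T$ by the same $\R \oplus_1 (\cdot)_{\ell_p}$ construction indexed along the nodes of $T$, so that $\mathcal{X}_T$ has property HFC$_{p,d}$ (via Theorem~\ref{thm somme} applied along the tree, together with Lemma~\ref{lemma R}) precisely when $T$ is well-founded, and fails it (because $\ell_1$ enters the asymptotic structure, as for $X_p^\omega$) when $T$ is ill-founded. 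The key technical claims are: (i) $T \mapsto \mathcal{X}_T \in \SB$ is Borel for the Effros--Borel structure, which follows from the explicit nature of the $\ell_p$-sum construction and Lemma~\ref{lemme KRR}; (ii) well-foundedness of $T$ yields a bound on the Szlenk index allowing $\mathcal{X}_T$ to inherit HFC$_{p,d}$ stably under the iterated sums; and (iii) ill-foundedness produces an isometric (or equi-Lipschitz) copy of the Hamming graphs, destroying concentration. Establishing (iii) rigorously, i.e.\ that an infinite branch forces failure of HFC$_{p,d}$ rather than merely a large Szlenk index, is where I expect the real obstacle to lie, since HFC$_{p,d}$ is strictly stronger than having infinite Szlenk index and one must locate the Hamming graphs concretely inside the limit space along the branch.
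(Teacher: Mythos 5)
Your proposal has two genuine gaps. The first is in the co-analyticity argument: as you set it up, the property reads $\exists \lambda\, \forall k\, \forall f\, \exists \M \in [\N]^\omega\, [\text{Borel}]$, and a universal quantifier over a Polish space of functions followed by an existential quantifier over $[\N]^\omega$ is on its face $\Pi^1_2$, not $\Pi^1_1$; the inner $\exists \M$ is not ``absorbed'' by any counting of quantifier alternations. The step you dismiss as delicate bookkeeping is precisely where the paper invokes Ramsey's theorem: in the negation, ``for every $\M$ there is a bad pair in $[\M]^k$'' is upgraded, via homogeneity and relabeling through the increasing bijection of $\N$ onto a homogeneous set (which can only decrease the quantities $\Lip_j$, hence preserves badness), to ``there exists $f$, with values in the dense sequence $d_n(CB_X)$, \emph{all} of whose pairs are bad.'' This eliminates the quantifier over $[\N]^\omega$ entirely, and only then is the negation analytic.

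The second gap is in the reduction, and it sits exactly where you flag the obstacle. Your tree-indexed iteration of $\R \oplus_1 (\,\cdot\,)_{\ell_p}$ is a recursion with no base case along an infinite branch, so $\mathcal{X}_T$ is not even defined when $T$ is ill-founded; moreover, your proposed mechanism for failure of HFC$_{p,d}$ --- that $\ell_1$ enters the asymptotic structure --- cannot work, since the paper's own space $X_p^\omega$ has $\ell_1$ in its asymptotic structure and nevertheless enjoys HFC$_{p,d}$ (this is the point of the remark on asymptotic Rademacher type). What actually destroys concentration is an honest isometric copy of $\ell_q$ for some $q<p$ (including $q=1$) on disjoint supports: the map $\nb \mapsto \sum_{j=1}^k e_{n_j}$ is then Hamming--Lipschitz with $\Lip_j \le 2^{1/q}$, while disjointly supported pairs, which exist in every $[\M]^k$, lie at distance $(2k)^{1/q} \gg \lambda k^{1/p}$; no equi-Lipschitz copy of the Hamming graphs is required. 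The paper's reduction builds this dichotomy explicitly: fix $q \in (1,p)$ and let $X_T$ be the completion of $c_{00}(T)$ under $\|x\|_T = \sup \bigl\{ \bigl( \sum_{i=1}^n \|x_{|I_i}\|_{\ell_q}^p \bigr)^{1/p} : I_1,\dots,I_n \text{ incomparable segments of } T \bigr\}$, which is manifestly Borel in $T$; if $T$ is well-founded, $X_T$ has HFC$_{p,d}$ by transfinite induction on the order of $T$ using Theorem \ref{thm somme}, and if $T$ is ill-founded, an infinite branch spans an isometric copy of $\ell_q$, so HFC$_{p,d}$ fails. (Had you pushed your own construction through, the stacked $\R \oplus_1$ summands along a branch would give an isometric $\ell_1$ --- it is this concrete copy, not the asymptotic structure, that would finish your argument.)
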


Before proving this proposition, let us recall some terminology. We denote by $\Tr$ the set of all trees over $\N$, where a tree over $\N$ is a set $T$ such that $\varnothing \in T$, $T \subset \bigcup_{k=1}^\infty [\N]^k$ and $(n_1, \cdots, n_k) \in T$ as soon as $(n_1, \cdots, n_{k+1}) \in T$ for some $n_{k+1}>n_k$. A natural partial order $\preceq$ can be defined on a tree $T \in \Tr$ by setting $\varnothing \preceq \nb$ for every $\nb \in T$ and $(n_1, \cdots, n_j) \preceq (m_1, \cdots, m_k)$ if $k \geq j$ and $m_i=n_i$ for every $1 \leq i \leq j$. A tree is said well-founded if it does not contain an infinite increasing sequence, ill-founded if it does. Finally, a linearly ordered subset $I \subset T$ with respect to $\preceq$ is called a segment if $T$ and two segments $I,J$ of $T$ are said to be incomparable if we don't have $\nb \preceq \mb$ or $\mb \preceq \nb$ for any $\nb \in I_1$, $\mb \in I_2$.

\begin{proof}[Proof of Proposition \ref{prop complexity}]
First, let us prove that having HFC$_{p,d}$ is a co-analytic condition. We will use Lemma \ref{lemme KRR}. By definition, a Banach space $X$ does not have property HFC$_{p,d}$ if and only if for every $\lambda \in \N$, there exists $C \in \N$, $k \in \N$ and $f : [\N]^k \to \N$ so that
\[ \|d_{f(\nb)}(CB_X)-d_{f(\mb)}(CB_X)\| \geq \lambda \sum_{j=1}^k \Lip_j(d_{f(\cdot)}(CB_X)) \]
for all $\nb < \mb \in [\N]^k$. Since the map $\left\lbrace \begin{array}{lll}
SB \to \mathcal{F}(C(\Delta)) \\
X \mapsto CB_X \end{array} \right.$ is Borel, not having HFC$_{p,d}$ is an analytic condition and therefore, having HFC$_{p,d}$ is a co-analytic condition. Now, to prove the proposition, it is enough to find a Borel map $\varphi$ from the set of trees $\Tr$ into the set of separable Banach spaces so that, for every tree $T \in \Tr$, $T$ is well-founded if and only if $\varphi(T)$ has HFC$_{p,d}$. To do so, let $q \in (1, p)$ and let us denote by $(e_n)_{n \in \N}$ the canonical basis of $\ell_q$. If $T \in \Tr$, $x=(x_{\nb})_{nb \in T} \in c_{00}(T)$ and $I$ is a segment of $T$, we let $x_{|I}= \sum_{\nb \in I} x(\nb) e_{\max(\nb)} \in \ell_q$. Now, for each $T \in \Tr$ and $x \in c_{00}(T)$, let
\[ \|x\|_T=\sup \Big\{ \Big( \sum_{i=1}^n \|x_{|I_i}\|_{\ell_q}^p \Big)^{1/p}, \ I_1, \cdots, I_n \text{ incomparable segments of } T \Big\}  \]
and denote by $X_T$ the completion of $c_{00}(T)$ under the norm $\|.\|_T$. By a transfinite induction on the order of $T$ and Theorem \ref{thm somme}, we get that $X_T$ has property HFC$_{p,d}$ for every well-founded tree $T \in \Tr$ (see \cite{Braga3} for a similar transfinite induction). Since $X_T$ contains an isometric copy of $\ell_q$ for every ill-founded tree $T$ and $\ell_q$ does not have property HFC$_{p,d}$, the Borel function $\varphi : T \mapsto X_T$ is so that $T$ is well-founded if and only if $\varphi(T)$ has HFC$_{p,d}$.
\end{proof}

\begin{rmk}
Likewise, the set of separable Banach spaces satisfying HIC$_{p,d}$ is completely co-analytic.
\end{rmk}

\begin{corollary}
Let $p \in (1, \infty)$. There is no separable Banach space $Z$ with property HFC$_{p,d}$ (resp. HIC$_{p,d}$) such that every separable Banach space with HFC$_{p,d}$ (resp. HIC$_{p,d}$) can be coarse-Lipschitz embedded into $Z$.
\end{corollary}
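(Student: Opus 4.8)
The plan is to argue by contradiction, combining the complexity statement of Proposition \ref{prop complexity} with two facts that are already available: that property HFC$_{p,d}$ passes from a space to any Banach space that coarse Lipschitz embeds into it (recalled at the end of Section 2), and that coarse Lipschitz embeddability into a fixed separable space is an analytic condition. Suppose, for a contradiction, that there is a separable $Z$ with property HFC$_{p,d}$ such that every separable Banach space with HFC$_{p,d}$ coarse Lipschitz embeds into $Z$. Write
\[ \mathcal{H}=\{X\in\SB : X \text{ has HFC}_{p,d}\}, \qquad \mathcal{E}_Z=\{X\in\SB : X \text{ coarse Lipschitz embeds into } Z\}. \]
First I would show that these two classes coincide. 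The inclusion $\mathcal{H}\subseteq\mathcal{E}_Z$ is exactly the universality hypothesis. For the reverse inclusion, since $Z\in\mathcal{H}$ and HFC$_{p,d}$ is inherited by any Banach space that coarse Lipschitz embeds into a space having it, every $X\in\mathcal{E}_Z$ has HFC$_{p,d}$, i.e. $\mathcal{E}_Z\subseteq\mathcal{H}$. Hence $\mathcal{H}=\mathcal{E}_Z$.

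Next I would invoke the descriptive-set-theoretic dichotomy. By Proposition \ref{prop complexity}, $\mathcal{H}$ is completely co-analytic, hence co-analytic but not Borel. On the other hand $\mathcal{E}_Z$ is analytic (this is the technical point, addressed below). Thus $\mathcal{H}=\mathcal{E}_Z$ would be simultaneously analytic and co-analytic, so Souslin's theorem would force it to be Borel, contradicting that $\mathcal{H}$ is not Borel. This contradiction proves the statement for HFC$_{p,d}$. The statement for HIC$_{p,d}$ follows verbatim, using the remark that the class of separable Banach spaces with HIC$_{p,d}$ is also completely co-analytic and that HIC$_{p,d}$ is equally stable under coarse Lipschitz embeddings into Banach spaces.

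The main obstacle is to verify rigorously that $\mathcal{E}_Z$ is analytic. The idea is to reduce the existence of a coarse Lipschitz embedding $X\to Z$ to a condition quantifying over countably many data, so that the witness ranges over a genuine Polish space. Using the selectors of Lemma \ref{lemme KRR}, one fixes countable dense sets $(d_n(X))_{n}$ of $X$ and $(d_n(Z))_{n}$ of $Z$; a map from $(d_n(X))_n$ to $(d_n(Z))_n$ is then an element of the Polish space $(d_n(Z))_n^{\N}$, and the coarse Lipschitz embedding inequalities, existentially quantified over rationals $\theta\geq 0$ and $0<c_1<c_2$, are Borel in this parameter. One then checks that $X$ coarse Lipschitz embeds into $Z$ if and only if such a map on the dense set exists: a genuine embedding restricts to the dense set, while conversely a nearest-point assignment turns an embedding of the dense set into one of all of $X$, the additive constants this introduces being harmless in the definition of a coarse Lipschitz embedding $\rho_f(t)\geq At-C$, $\omega_f(t)\leq Bt+D$. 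Projecting out the parameter exhibits $\mathcal{E}_Z$ as a continuous image of a Borel set, hence analytic (compare the coding used in \cite{Braga3}). Care is needed only at small distances, where the embedding condition imposes nothing and so causes no measurability difficulty.
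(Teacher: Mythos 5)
Your proposal is correct and follows essentially the same route as the paper: assume such a universal $Z$ exists, observe that the stability of HFC$_{p,d}$ under coarse Lipschitz embeddings forces $\mathcal{HFC}_{p,d}=\operatorname{CLE}_Z$, and then contradict Proposition \ref{prop complexity} since $\operatorname{CLE}_Z$ is analytic while a completely co-analytic set cannot be (the paper cites \cite{Braga2} for the analyticity of $\operatorname{CLE}_Z$, whereas you sketch the coding argument directly, but this is the same underlying fact). Your explicit appeal to Souslin's theorem versus the paper's implicit use of ``completely co-analytic $\Rightarrow$ not analytic'' is an immaterial difference in phrasing the final contradiction.
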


\begin{proof}
By contradiction, let us assume that we can find such a separable space $Z$ with HFC$_{p,d}$. If we denote by $\operatorname{SB}$ the class of separable Banach spaces, by $\mathcal{HFC}_{p,d}$ the set of separable Banach spaces with property HFC$_{p,d}$ and $\operatorname{CLE}_Z=\{ Y \in \operatorname{SB} ; Y \text{ coarse Lipschitz embeds into } Z \}$, then $\mathcal{HFC}_{p,d}=\operatorname{CLE}_Z$ (let us recall that property HFC$_{p,d}$ is stable under coarse Lipschitz embedding). It is easily checked that $\operatorname{CLE}_Z$ is analytic (see the proof of Theorem 1.7 - Section 7.1 in \cite{Braga2}), which contradicts Proposition \ref{prop complexity}. The same can be done with HIC$_{p,d}$.
\end{proof}

\section{Final remarks and open problems}

In view of Corollary \ref{cor AUS}, the following question seems natural.

\begin{pb}
If a Banach space $X$ has property HIC$_{p,d}$ for some $p \in (1, \infty)$, is it Asplund?
\end{pb}

We will finish this paper by recalling the important Problem 2 from \cite{GLZ2014}.

\begin{pb}
If a Banach space $X$ coarse Lipschitz embeds into a Banach space $Y$ that is reflexive and AUS, does it follow that $X$ is AUSable?
\end{pb}

It follows from Corollary \ref{cor AUS} that concentration properties are not the right tool to answer this question. If the answer were to be negative, a counterexample could come from the space $X_2^\omega$.

\begin{thank}
The author would like to thank Gilles Lancien for very useful conversations and valuable comments. The author also thanks the referee for her/his suggestions that helped improve this article.
\end{thank}

\bibliographystyle{plain}
\bibliography{biblio}

\end{document}